\newtheorem{theorem}{Theorem}
\newtheorem{lemma}{Lemma}
\newtheorem{proposition}{Proposition}
\newtheorem{remark}{Remark}
\definecolor{darkblue}{rgb}{0.0,0.0,1.0}
\def\K {{\cal S}_+}
\def\B{{\mathbb{B}}}
\def\R{{\mathbb{R}}}
\def\Z{{\mathbb{Z}}}
\def\C{{\mathcal C}}
\def\T{{\mathcal T}}
\def\K{{\mathcal K}}
\def\Ve{{\mathcal V}}
\def\Pe{{\mathcal P}}
\def\He{{\mathcal H}}
\def\Oe{{\mathcal O}}
\def\Cupper{{\overline C}}
\def\Clower{{\underline C}}
\def\Vupper{{\overline V}}
\definecolor{myblue}{RGB}{3,70,148}
\def\rred{\textcolor{black}}
\def\rblue{\textcolor{purple}}
\begin{document}

\title{ {\bf Polynomial Time Algorithms and Extended Formulations for 
Unit Commitment Problems}\\[3mm]
\author{\normalsize {\bf Yongpei Guan}$^{\ddag}$, {\bf Kai Pan}$^{\dag}$\footnote{Corresponding author}, and {\bf Kezhuo Zhou}$^{\ddag}$
\\ \\
{\small $^{\ddag}$Department of Industrial and Systems Engineering, University of Florida} \\
{\small Gainesville, Florida 32611, USA. Emails: guan@ise.ufl.edu; zhoukezhuo@ufl.edu} \\
[2mm]
{\small $^{\dag}$Department of Logistics and Maritime Studies, Hong Kong Polytechnic University} \\
{\small Hung Hom, Kowloon, Hong Kong. Email: kai.pan@polyu.edu.hk} \\
}
}

\date{}

\maketitle

\vspace{-0.3in}

\begin{abstract}
\setlength{\baselineskip}{18pt}
Recently increasing penetration of renewable energy generation brings challenges for power system operators to perform efficient power generation daily scheduling, due to the intermittent nature of the renewable generation and discrete decisions of each generation unit. Among all aspects to be considered, a unit commitment polytope is fundamental and embedded in the models at different stages of power system planning and operations. In this paper, we focus on deriving polynomial time algorithms for the unit commitment problems with general convex cost function and piecewise linear cost function respectively. We refine an $\mathcal{O}(T^3)$ time, where $ T $ represents the number of time periods, algorithm for the deterministic single-generator unit commitment problem with general convex cost function and accordingly develop an extended formulation in a higher dimensional space that can provide an integral solution, in which the physical meanings of the decision variables are described. It means the original problem can be solved as a convex program instead of a mixed-integer convex program. Furthermore, for the case in which the cost function is piecewise linear, by exploring the optimality conditions, we derive more efficient algorithms for both deterministic (i.e., $\mathcal{O}(T)$ time) and stochastic (i.e., $\mathcal{O}(N)$ time, where $N$ represents the number of nodes in the stochastic scenario tree) \rred{single-generator} unit commitment problems. We also develop the corresponding extended formulations for both deterministic and stochastic single-generator unit commitment problems that solve the original mixed-integer linear programs as linear programs. Similarly, physical meanings of the decision variables are explored to show the insights of the new modeling approach.

\vspace{0.25in}

\noindent{\it Key words:} Unit commitment; polynomial time algorithm; integral formulation
\end{abstract}

\newpage
\setcounter{page}{1}
\setlength{\baselineskip}{20pt}
\section{Introduction}
Unit commitment (UC) is fundamental in power system operations. It decides the unit commitment status (online/offline) and power generation amount at each time period for each unit over a finite discrete time horizon, with the objective of minimizing the total cost while satisfying the load (energy demand). Each unit should satisfy associated physical restrictions, such as generation upper/lower limits, ramp-rate limits, and minimum-up/-down time limits. 

Due to its significant importance in power system operations, UC has brought broad attention in academic and industry. In early 1960s, a dynamic programming algorithm was developed in~\cite{lowery1966generating} to formulate and solve the UC problem with single time period, in which the generation amount is discretized and the algorithm itself is not polynomial. Later on, in \cite{snyder1987dynamic}, a more general dynamic programming approximation algorithm (also by discretizing the generation amount) was developed to solve the problem with multiple time periods. Since these algorithms do not run in polynomial time, they are almost intractable. To target large-size problems, other solution approaches such as Lagrangian relaxation (see, e.g., \cite{zhuang1988towards, cheng2000unit}), genetic algorithms (see, e.g., \cite{kazarlis1996genetic, swarup2002unit}), and simulated annealing (see, e.g., \cite{zhuang1990unit, mantawy1998simulated}), have been developed to solve the problem. Detailed reviews of these approaches to solve the UC problem can be found in \cite{padhy2004unit} and \cite{saravanan2013solution}. Among these approaches, the Lagrangian relaxation approach \rred{was} broadly adopted in industry, due to its advantages of decomposing the network constrained UC problem \rred{with multiple generators} into a master problem and a group of subproblems where each subproblem solves an individual UC problem \rred{with a single generator}.

However, the Lagrangian relaxation approach does have \rred{limitations}. For instance, it cannot guarantee to provide an optimal or even a feasible solution at the termination, in particular, when there are transmission network constraints currently faced by most wholesale markets, operated by Independent System Operators (ISOs), in US.  On the other hand, advanced mixed-integer-linear programming (MILP) techniques have been improved significantly during the past decades, and meanwhile MILP in general has advantages in terms of ease of development and maintenance, ability to specify accurate solutions, and exact modeling of complex functionality \cite{nemhauser2013ip}. Thus, \rred{recently, the Lagrangian relaxation has been replaced with MILP approaches 
for power system operations.}
For instance, MILP approaches have been adopted by all ISOs in US (see, e.g., \cite{jabr2012tight, carlson2012miso}) and create more than \$$500$ million annual savings \cite{nemhauser2013ip}. Among different aspects MILP can contribute to the power system operations, one particularly important one is to formulate and solve the transmission network-constrained UC problem \cite{bixby2010mixed}.

The earliest MILP UC formulation was proposed in the 1960s as described in \cite{garver1962power}, and further improvements have been developed recently. For instance, in \cite{de2002price}, an exact and computationally efficient MILP formulation is provided to address the single-generator UC  problem in order to maximize the total profit. \rred{Following that, tighter approximated MILP formulations are provided in \cite{carrion2006computationally, frangioni2009tighter, wu2011tighter,viana2013new}
by approximating the general convex cost function.} In \cite{fu2007fast}, security-constrained UC problems are modeled and solved through the MILP approach for large-scale power systems with multiple generators, \rred{which are further solved in extensive studies by using 
heuristic (including metaheuristic and local search) approaches \cite{roy2013solution, viana2013new, rahman2014metaheuristic, roque2014hybrid}, among others.}
Furthermore, there has been research progress on developing strong formulations for the {UC} problem by exploring its special structure {so as to speed up the algorithm to solve the problem}. For instance, in \cite{lee2004min}, alternating up/down inequalities are proposed to strengthen the minimum-up/-down time polytope of the UC problem. In \cite{rajan2005minimum}, the convex hull of the minimum-up/-down time polytope considering start-up costs is provided, in which additional start-up and shut-down variables are introduced to provide the integral formulation. \rred{In \cite{queyranne2017tight}, a further polyhedral study is provided to consider the bounded up/down times and interval-dependent start-ups.}  Recently, new families of strong valid inequalities are proposed in \cite{ostrowski2012tight}, \cite{damci2015polyhedral}, and \cite{pan2015deteruc} to tighten the ramping polytope of the UC problem.

Besides this, considering the intermittent nature of increasing penetration of renewable generation and generation amount dependency among time periods, recently, a multistage stochastic version of the deterministic UC {was} proposed in~\cite{jiang2016cutting}. In this approach, the extension of the deterministic UC was studied in which a stochastic programming approach (see, e.g., \cite{Shapiro2009SP}) is adopted to address uncertain problem parameters. We refer to the resulting model as the \emph{stochastic UC problem}, which can help make decisions adaptively accommodating uncertainties. \rred{Significant studies have been conducted to solve the stochastic UC problems efficiently, such as advanced decomposition and Lagrangian relaxation techniques~\cite{zheng2013decomposition, papavasiliou2013multiarea}, 
among others.} 
\rred{For notation clarification, in the remaining part of this paper, we denote the single-generator unit commitment problem as single-UC and the multiple-generator unit commitment problem as multi-UC.}

In general, both the deterministic and stochastic UC problems can be eventually formulated as MIPs, with general convex (typically quadratic) cost function (leading to MIQPs) or with piecewise linear cost function approximations (leading to MILPs). 
The perfect cases for solving MILPs are to (1) derive polynomial time algorithms to solve the problems and (2) discover extended formulations in the form of linear programs that can provide integral solutions. 
For the polynomial time algorithms, an efficient algorithm for the deterministic \rred{single-}UC with general convex cost function was studied in~\cite{frangioni2006solving} in which an $\Oe(T^3)$ time algorithm, where $T$ represents the number of time periods, is developed. In this paper, we first refine this algorithm by reducing the computational time to solve the master \rred{single-}UC problem from $\mathcal{O}(T^3)$ time in \cite{frangioni2006solving} to $\mathcal{O}(T^2)$ time, when the economic dispatch problem has been presolved. Meanwhile, based on our developed $\mathcal{O}(T^3)$ time algorithm, we derive an extended formulation in a higher dimensional space. Then, we study the \rred{single-}UC problems with piecewise linear cost functions, which are common in practice. For these cases, we discover the optimality conditions for the generation amounts in optimal solutions at each time period for both the deterministic and stochastic \rred{single-}UC problems. This key observation helps reduce the search space significantly and thus leads to efficient polynomial time algorithms. Accordingly, we develop much faster dynamic programming algorithms that run in $\mathcal{O}(T)$ time to solve the deterministic \rred{single-}UC problem and $\mathcal{O}(N)$ time to solve the stochastic one, where $N$ represents the number of nodes in the scenario tree.
Towards the extended formulation of UC, a recent study for the deterministic \rred{single-}UC problem was provided in \cite{knueven2016generating}, in which an integral polytope for the \rred{single-}UC problem is provided by using the theorem described in~\cite{balas1979disjunctive, balas1998disjunctive}.
\rred{However, it is not stated if this formulation could provide an integral optimal solution for the general convex cost function case.}
In this paper, we provide extended formulations {that can provide integral optimal solutions} based on the innovative dynamic programming algorithms we developed. {Furthermore,} physical meanings of the decision variables in the extended formulations are elaborated. To summarize, the main studies of this paper can be described as follows:
\begin{itemize}
	\item[(1)] In Section \ref{Ndeteruc}, we refine an $\mathcal{O}(T^3)$ time algorithm for the deterministic \rred{single-}UC problem with general convex cost function, which solves the problem in $\mathcal{O}(T^2)$ time when the economic dispatch problem has been presolved. We further derive an extended formulation that can provide an integral optimal solution (which indicates that the original problem can be solved as a convex program instead of a mixed-integer convex program).
	\item[(2)] \rred{In Section \ref{deteruc}, the general convex cost function is approximated by a piecewise linear function. By exploring the optimality conditions for the deterministic \rred{single-}UC problem, we derive a more efficient algorithm that runs in $\Oe(T)$ time and develop the corresponding extended formulation (which solves the original mixed-integer linear program as a linear program).}
	\item[(3)] \rred{In Section \ref{polysuc}, we further consider the uncertainty and formulate the stochastic \rred{single-}UC problem. By exploring the optimality conditions for the corresponding derived multistage stochastic \rred{single-}UC problem with piecewise linear cost function, we derive an efficient algorithm that runs in $\Oe(N)$ time and explore the corresponding extended formulation.}
\end{itemize}

To the best of our knowledge, in this paper, (i) we provide the most efficient polynomial time algorithms to solve the deterministic single-UC problems with both general convex and piecewise linear cost functions; (ii) we provide one of the first extended formulations (with rigorous proofs) that can provide optimal integral solutions for the deterministic single-UC problems with piecewise linear cost functions; (iii) we provide the first studies on the polynomial time algorithm development and extended formulations for the stochastic single-UC problems.

\section{Deterministic Single-UC with General Convex Cost Function} \label{Ndeteruc}
We first introduce the notation and describe the deterministic single-UC problem. We let $T$ be the number of time periods for the whole operational horizon, $L$ ($\ell$) be the minimum-up (-down) time limit, $\Cupper$ ($\Clower$) be the generation upper (lower) bound when the {generator} is online, $\Vupper$ be the start-up/shut-down ramp rate (which is usually between $\Clower$ and $\Cupper$, i.e., $\Clower \leq \Vupper \leq \Cupper$), and $V$ be the ramp-up/-down rate in the stable generation region. In addition, we let binary decision variable {$y_t$} represent the {generator}'s online (i.e., $y_t = 1$) or offline (i.e., $y_t = 0$) status, binary decision variable {$u_t$}  represent whether the {generator} starts up (i.e., $u_t = 1$) or not (i.e., $u_t = 0$), and continuous decision variable {$x_t$} represent the generation amount at time $t$. Moreover, we define continuous variable $SU_t$ to represent the start-up cost at time $t$ (i.e., the generator starts up at time $t$ with $y_{t-1} = 0$ and $y_t = 1$) following the start-up profile and 
continuous variable $SD_t$ to represent the shut-down cost at time $t$ (i.e., the generator shuts down at time $t+1$ with $y_t = 1$ and $y_{t+1} = 0$).
In addition, we let a general convex cost function $f(\cdot)$ denote the fuel cost minus the revenue as a function of its electricity generation amount, online/offline statuses, and electricity price. {Without loss of generality, we} {(1) assume that the cost function $ f(\cdot) $ is strictly convex with respect to the generation amount $ x $; (2)} assume that the {generator} has been offline for $ s_0 $ time periods ($ s_0\geq \ell $) before time $1$. The corresponding deterministic \rred{single-}UC problem can be described as follows:
\begin{subeqnarray} \label{model:Nduc}
	&\min  & \sum_{t=1}^{T} \bigg( SU_t + f_t(x_t,y_t) \bigg) + \sum_{t=L}^{T-1} SD_t \slabel{eqn:Nduc_obj} \\
	&\mbox{s.t.}& \sum_{i = t- L +1}^{t} u_i \leq y_t, \ \forall t \in [L, T]_{\Z}, \slabel{eqn:Np-minup} \\
	&& \sum_{i = t- \ell +1}^{t} u_i \leq 1 - y_{t- \ell}, \ \forall t \in [\ell, T]_{\Z}, \slabel{eqn:Np-mindn} \\
	&& y_t - y_{t-1} - u_t \leq 0, \ \forall t \in [1, T]_{\Z}, \slabel{eqn:Np-udef} \\
	&&  - x_t + \Clower y_t \leq 0, \ \forall t \in [1, T]_{\Z}, \slabel{eqn:Np-lower-bound} \\
	&& x_t - \overline{C} y_t \leq 0, \ \forall t \in [1, T]_{\Z}, \slabel{eqn:Np-upper-bound} \\
	&& x_t - x_{t-1} \leq V y_{t-1} + \Vupper (1 - y_{t-1}), \ \forall t \in [1, T]_{\Z}, \slabel{eqn:Np-ramp-up} \\
	&& x_{t-1} - x_t \leq V y_t + \Vupper (1-y_t), \ \forall t \in [1, T]_{\Z} \slabel{eqn:Np-ramp-down},\\
	&& SU_t \geq SU(t+s_0 -1) \bigg(u_t - \sum_{s=1}^{k-1}y_s \bigg),\ \forall t \in [1, T]_{\Z}, \slabel{eqn:Nsu1} \\
	&& SU_t \geq SU(t-k -1) \bigg(u_t - \sum_{s=k+1}^{t-1}y_s \bigg), \ \forall t \in [L+\ell +1, T]_{\Z}, k \in [L, t-\ell-1]_{\Z},\slabel{eqn:Nsu2} \\
	&& SD_t \geq SD(t-k+1) \bigg(u_k - \sum_{s=k}^{t} (1-y_s) \bigg), \ \forall t \in [L, T-1]_{\Z}, k \in [1, t-L+1]_{\Z},\slabel{eqn:Nsd} \\
	&& y_t, u_t \in \{0, 1\},\  SU_t, SD_t \geq 0,  \ x_0 = y_0=0, \slabel{eqn:Np-nonnegativity}
\end{subeqnarray}
where constraints \eqref{eqn:Np-minup} and \eqref{eqn:Np-mindn} describe the minimum-up and minimum-down time limits, respectively (if the {generator} starts up at time $t-L+1$, it should stay online in the following $L$ consecutive time periods until time $t$; if the {generator} shuts down at time $t-\ell+1$, it should stay offline in the following $\ell$ consecutive time periods until time $t$), constraints \eqref{eqn:Np-udef} describe the logical relationship between $y$ and $u$, constraints \eqref{eqn:Np-lower-bound} and \eqref{eqn:Np-upper-bound} describe the generation lower and upper bounds, and constraints \eqref{eqn:Np-ramp-up} and \eqref{eqn:Np-ramp-down} describe the generation ramp-up and ramp-down rate limits. Constraints \eqref{eqn:Nsu1} describe the start-up cost if the {generator} starts up for the first time, where $ SU(\cdot) $ is a start-up cost function whose variable is the offline time length before starting up. Constraints \eqref{eqn:Nsu2} describe the start-up cost when the {generator} starts up some time later after {a shut-down}. Constraints \eqref{eqn:Nsd} describe the shut-down cost, where $SD(\cdot) $ is a shut-down cost function whose variable is the online time length before shutting down. Typically $SD(\cdot)$ is a constant {number}. In the above formulation, the objective is to minimize the total cost minus the revenue. 
For notation convenience, we define $[a,b]_{\Z}$ with $a < b$ as the set of integer numbers between integers $a$ and $b$, i.e., $\{a, a+1, \cdots, b\}$. 

\subsection{A Refined $\Oe(T^3)$ Time Dynamic Programming Algorithm} \label{subsec:t3-alg}
The polynomial time algorithm for the deterministic single-UC problem with general convex cost function was first developed in \cite{frangioni2006solving}, where an $ \Oe(T^3) $ time dynamic programming algorithm is proposed. This algorithm keeps tracking the ``on'' periods for the {generator} and use a backward dynamic programming {algorithm} to solve the problem. In this algorithm, {each {element in} the state space, i.e., a time{-index} pair $(h,k)$} {with} $k \geq h + L -1$, represents the generator is on during the {time} period $[h,k]_{\Z}$, i.e., the generator is turned on at time $ h $ and turned off at time $ k +1 $. Then the Bellman equation can be written as follows:
\begin{eqnarray}
&& {V_0 (h,k) = C(h,k) + \min_{r \geq k + \ell +1} \Big\{ SU(r-k-1) + V_0(r,q), 0 \Big\},} \nonumber
\end{eqnarray}
for {all possible pairs $ (h,k), (r,q) $ in the state space}. In this equation, $ C(t,k) $ \rblue{represents the optimal cost, equals to generation cost minus revenue, in the interval $[t, k]$}
if the {generator} starts up at time $ t $ and shuts down at time $k + 1$ (online at $k$).
We call this {dynamic programming algorithm as }``part II'' of the algorithm {(solving the problem {by} assuming $C(h,k)$ for all possible pairs $(h,k)$ are given).}
An efficient shortest path algorithm was developed in \cite{frangioni2006solving} to solve this ``part II'' in $ \Oe(T^3) $ {time}.
To speed up the {whole} algorithm, the economic dispatch problem {to calculate $C(h,k)$ for all possible pairs $(h,k)$, i.e.,} for all possible ``on'' intervals, can be precalculated. 
We call this {precalculation of the economic dispatch problem} {as} ``part I'' of this algorithm. In \cite{frangioni2006solving}, an intelligent algorithm was developed to solve {the} ``part I'' problem in $\Oe(T^3)$ time as well.

As compared to \cite{frangioni2006solving}, we propose a more efficient polynomial-time dynamic programming algorithm for ``part II''. We first define {the} optimal value function and then develop the Bellman equations accordingly. The key difference as compared to \cite{frangioni2006solving} is that we use different state spaces and double value functions corresponding to each time period $t$. For instance, we let $ V_\uparrow (t) $ represent the cost from time $ t $ to the end when the generator starts up at time $ t $, as shown in Figure \ref{fig:1st_value}, and $V_\downarrow (t)$ represent the cost from time $t$ to the end when the generator shuts down at time $t +1$ (i.e., $t$ is the last ``on'' period for the current ``on'' interval), as shown in Figure \ref{fig:2nd_value}. Thus, we have the following dynamic programming equations:
\begin{subeqnarray} \label{new_dp1}
	&& V_\uparrow (t) = \min_{  \substack{ k \in [\min \{t+L-1, \\  T-1\}, T-1]_{\Z} }  } \bigg\{ SD(k-t+1) + C(t,k) +  V_\downarrow (k), C(t,T) +  V_\downarrow (T) \bigg\}, \ \forall t \in [1, T]_{\Z},  \slabel{eqn:Ndp1} \\
	&& V_\downarrow (t) = \min_{ k \in [t+\ell+1,T]_{\Z} } \bigg\{ SU(k-t-1) +  V_\uparrow (k), 0 \bigg\}, \ \forall t \in [L, T-\ell -1]_{\Z},  \slabel{eqn:Ndp2} \\
	&& V_\downarrow (t) = 0, \ \forall t \in [T-\ell, T]_{\Z}. \slabel{eqn:Ndp3}
\end{subeqnarray}
Equations \eqref{eqn:Ndp1} indicate that when the {generator} starts up at time $ t $, it can either keep online until time $ k $ with $ k-t +1 \geq L $ and $k \leq T-1$ or keep online throughout all the remaining time periods.
Equations \eqref{eqn:Ndp2} indicate that when the {generator} shuts down at time $ t+1 $, it can either keep offline to the end or starts up again
at time $k$ with $t+\ell+1 \leq k \leq T$. Following the start-up (resp. shut-down) profile, our start-up (resp. shut-down) function can capture the length of offline (resp. online) time before starting up (resp. shutting down). Equations \eqref{eqn:Ndp3} describe that the {generator} cannot start up again after {it shuts} down at $t+1$ with $T-\ell \leq t \leq T$ due to the minimum-down time limit.

\begin{figure}[htb]
	\centering
\begin{tikzpicture}[
	scale=1, 
	wdot/.style = {
      draw,
      fill = white,
      circle,
      inner sep = 0pt,
      minimum size = 4pt
    },
   	bdot/.style = {
      draw,
      fill = black,
      circle,
      inner sep = 0pt,
      minimum size = 4pt
    } ]
\coordinate (O) at (0,0);
\draw[->] (0,0) -- (10,0) coordinate[label = {below:$T$}] (tmax);
\draw[->] (0,0) -- (0,1.5) coordinate[label = {left:$y$}] (ymax);
\draw (0,1) node[wdot, label = {left:$1$}] {};
\draw[thick, draw=red] (0,0) -- (1.5,0);
\draw (0,0) node[wdot, label = {left:$0$}] {};
\draw[thick, draw=red] (4.5,0) -- (7.5,0);
\draw (1.5,0) node[wdot, label = {below:$t$}] {};
\draw (1.5,-0.7) node{$u_t=y_t=1$};

\draw[->, dotted] (1.5,0) -- (1.5,1);
\draw[thick, draw=myblue] (1.5,1) -- (4.5,1);
\draw[->, dotted] (4.5,1) -- (4.5,0.1);

\draw (4.5,0) node[wdot, label = {below:$k$}] {};
\draw (4.5,-0.7) node{$y_k=1$};
\draw (4.5,-1.2) node{\footnotesize $k \geq t+L-1$};

\draw (7.5,0) node[wdot] {};
\draw[->, dotted] (7.5,0) -- (7.5,1);
\draw[thick, draw=myblue] (7.5,1) -- (9.5,1);

\draw[thick] (1.5,1.2) -- (1.5,1.6);
\draw[->] (1.5,1.4) -- (2.5,1.4);
\draw (2,1.65) node{$V_\uparrow (t)$};

\fill[gray!40,nearly transparent] (1.5,2) -- (1.5,0.1) -- (9.5,0.1) -- (9.5,2) -- cycle;
\end{tikzpicture}
\caption{Optimal value function $V_\uparrow (t)$}\label{fig:1st_value}
\end{figure}

\begin{figure}[htb]
\centering
\begin{tikzpicture}[
	scale=1, 
	wdot/.style = {
      draw,
      fill = white,
      circle,
      inner sep = 0pt,
      minimum size = 4pt
    },
   	bdot/.style = {
      draw,
      fill = black,
      circle,
      inner sep = 0pt,
      minimum size = 4pt
    } ]
\coordinate (O) at (0,0);
\draw[->] (0,0) -- (10,0) coordinate[label = {below:$T$}] (tmax);
\draw[->] (0,0) -- (0,1.5) coordinate[label = {left:$y$}] (ymax);
\draw (0,1) node[wdot, label = {left:$1$}] {};
\draw[thick, draw=red] (0,0) -- (1.5,0);
\draw (0,0) node[wdot, label = {left:$0$}] {};
\draw[thick, draw=red] (4.5,0) -- (7.5,0);
\draw (1.5,0) node[wdot] {};

\draw[->, dotted] (1.5,0) -- (1.5,1);
\draw[thick, draw=myblue] (1.5,1) -- (4.5,1);
\draw[->, dotted] (4.5,1) -- (4.5,0.1);

\draw (4.5,0) node[wdot, label = {below:$t$}] {};
\draw (4.5,-0.7) node{$y_t=1$};
\draw (4.5,-1.1) node{$y_{t+1}=0$};

\draw (7.5,0) node[wdot, label = {below:$k$}] {};
\draw (7.5,-0.7) node{$y_k=1$};
\draw (7.5,-1.1) node{$y_{k-1}=0$};
\draw (7.5,-1.5) node{\footnotesize $k \geq t+\ell+1$};
\draw[->, dotted] (7.5,0) -- (7.5,1);
\draw[thick, draw=myblue] (7.5,1) -- (9.5,1);

\draw[thick] (4.5,1.2) -- (4.5,1.6);
\draw[->] (4.5,1.4) -- (5.5,1.4);
\draw (5,1.65) node{$V_\downarrow (t)$};

\fill[gray!40,nearly transparent] (4.5,2) -- (4.5,0.1) -- (9.5,0.1) -- (9.5,2) -- cycle;

\end{tikzpicture}
\caption{Optimal value function $V_\downarrow (t)$}\label{fig:2nd_value}
\end{figure}

As we consider the deterministic \rred{single-}UC problem from times $1$ to $T$ and assume the {generator} has been offline for $ s_0 $ time periods, our goal is to find out the value of the following function:
\begin{eqnarray} \label{new_dp2}
	z = V_\downarrow (-s_0) := \min_{t \in [1,T]_{\Z}} \Big\{SU(s_0 + t -1) +  V_\uparrow (t), 0 \Big\}.
\end{eqnarray}
In order to obtain the optimal objective value and corresponding optimal solution, we calculate $V_\uparrow (t)$ and $V_\downarrow (t)$ for all $t$ and record the optimal candidates for them. To calculate the value of each optimal value function in Bellman equations \eqref{eqn:Ndp1} -- \eqref{eqn:Ndp3} {corresponding to each} $t \leq T$, we search among the candidate solutions for each {possible} $k \leq T$, which takes $ \Oe(T) $ time. Thus, the total time to calculate $V_\downarrow (-s_0) $ is $ \Oe (T^2) $ for the aforementioned ``part II''. The corresponding optimal solution can be obtained by tracing the optimal candidates for each optimal value function starting from $V_\downarrow (-s_0)$, and this takes $ \Oe (T) $ time in total. In summary, our backward induction dynamic programming algorithm for the deterministic \rred{single-}UC problem takes $ \Oe (T^2) $ time for ``part II'' (i.e., if all $ C(t,k) $ are presolved), meaning that our algorithm refines the algorithm in \cite{frangioni2006solving}. More importantly, our algorithm is {beneficial} to derive a better reformulation in the following section.

\subsection{An Extended Formulation for Deterministic Single-UC with \rblue{Piecewise Linear} Convex Cost Function}
In this section, we reformulate the dynamic program in Section \ref{subsec:t3-alg} into a linear program whose dual formulation eventually provides an extended formulation for the deterministic \rred{single-}UC problem.
By incorporating the dynamic {programming} equations (i.e., \eqref{eqn:Ndp1} - \eqref{eqn:Ndp3} and  \eqref{new_dp2}) as constraints, we obtain the following equivalent linear program:
\begin{subeqnarray} \label{model:LP}
	&\max  & z \slabel{eqn:LP_obj}\\
\rred{(\alpha_t)} \hspace{0.1in}	&\mbox{s.t.} & z \leq SU(s_0 + t -1) +  V_\uparrow (t), \ \forall t \in [1,T]_{\Z}, \slabel{eqn:LP1}\\
\rred{(\beta_{tk})} \hspace{0.1in}	&  & V_\uparrow (t) \leq  SD(k-t+1) +  C(t,k) +  V_\downarrow (k), \nonumber \\
	&& \hspace{1.4in}  \forall k \in [\min \{t+L-1,T-1\}, T-1]_{\Z}, \forall t \in [1, T]_{\Z}, \slabel{eqn:LP2}\\
\rred{(\beta_{tk})} \hspace{0.1in}	&& V_\uparrow (t) \leq  C(t,T) +  V_\downarrow (T), \ \forall t \in [1, T]_{\Z}, \slabel{eqn:LP22}\\
\rred{(\gamma_{kt})} \hspace{0.1in}	&& V_\downarrow (t) \leq SU(k-t-1) +  V_\uparrow (k), \ \forall k \in [t+\ell +1,T]_{\Z}, \forall t \in [L, T-\ell -1]_{\Z}, \slabel{eqn:LP3}\\
\rred{(\theta_{t})} \hspace{0.1in}	&& V_\downarrow (t) = 0, \ \forall t \in [T-\ell, T]_{\Z},\slabel{eqn:LP4}\\
	&& z \leq 0, \ V_\downarrow (t) \leq 0, \ \forall t \in [L,T-\ell -1]_{\Z}.\slabel{eqn:LP5}
\end{subeqnarray}
Note here that the optimal value functions in the dynamic program become decision variables in the above formulation. {To} obtain the value $V_\downarrow (-s_0) $ under the dynamic programming framework it is equivalent to maximize variable $ z $ in the linear program above.

Since the above linear program cannot be solved directly as $ C(t,k) $ (the objective value of {the} economic dispatch problem) are unknown, we first show how to obtain the value of $ C(t,k) $ by discussing two cases, i.e., the cases $ k \leq T-1 $ and $ k = T $, respectively.

When $ k \leq T-1 $, we have the following formulation to calculate $C(t,k)$ with $(t,k)$ given:
\begin{subeqnarray} \label{model:ED}
	C(t,k) = & \min & \sum_{s=t}^{k}  \phi_s  \slabel{eqn:ED_obj} \\
\rred{(\lambda_s^-)} \hspace{0.3in}	& \mbox{s.t.}& -x_s \leq -\underline{C}, \ \forall s \in [t,k]_{\Z}, \slabel{eqn:EDlow}\\
\rred{(\lambda_s^+)} \hspace{0.3in}	&& x_s \leq \overline{C}, \ \forall s \in [t,k]_{\Z}, \slabel{eqn:EDupp}\\
\rred{(\mu_t)} \hspace{0.3in}	&& x_t \leq \overline{V}, \slabel{eqn:EDram1}\\
\rred{(\mu_k)} \hspace{0.3in}	&& x_k \leq \overline{V}, \slabel{eqn:EDram2}\\
\rred{(\sigma_s^+)} \hspace{0.3in}	&& x_s - x_{s-1} \leq V, \ \forall s \in [t+1,k]_{\Z}, \slabel{eqn:EDramup}\\
\rred{(\sigma_s^-)} \hspace{0.3in}	&& x_{s-1} - x_s \leq V, \ \forall s \in [t+1,k]_{\Z}, \slabel{eqn:EDramdo}\\
\rred{(\delta_{sj})} \hspace{0.3in}	&& \phi_s \geq a_j^s x_s + b_j, \ \forall s \in [t,k]_{\Z}, j\in  [1,N]_{\Z}. \slabel{eqn:EDlin}
\end{subeqnarray}
{When} $ k = T $, we {have the corresponding formulation by removing} constraint \eqref{eqn:EDram2} as the {generator} is not required to shut down at time $ T+1 $ if it stays online until time $ T $. Note here {that} we assume the generation cost function to be piecewise linear (as indicated by constraints \eqref{eqn:EDlin} with $N$ pieces). {More specifically}, here we use continuous variable $\phi_s$ to represent the cost at time $s$, while $a_j^s$ and $b_j$ are the slope and intercept of the $j$th piece of the cost function at time $s$, respectively.

Next, to incorporate the economic dispatch constraints (e.g., \eqref{eqn:EDlow} - \eqref{eqn:EDlin}) into our proposed linear program \eqref{model:LP}, we take the dual of the economic dispatch model and embed its dual formulation into model \eqref{model:LP}. For instance, for  $ k \leq T-1 $, we have the dual formulation as follows.
\begin{subeqnarray} \label{model:EDd}
	C(t,k) = &\max & \sum_{s=t}^{k} \bigg(\lambda_s^+ \overline{C} - \lambda^-_s \underline{C} \bigg) + \overline{V} (\mu_t+\mu_k) + \sum_{s=t+1}^{k} V \bigg( \sigma_s^+ + \sigma_s^- \bigg) + \sum_{s=t}^{k}\sum_{j=1}^{N} b_j \delta_{sj} \slabel{eqn:EDd_obj} \\
\rred{(q_{tk}^t)} \hspace{0.3in}	&\mbox{s.t.} & \lambda_t^+ - \lambda_t^- + \mu_t - \sigma_{t+1}^- + \sigma_{t+1}^+ - \sum_{j=1}^{N} a_j^t \delta_{tj} = 0, \slabel{eqn:EDd1}\\
\rred{(q_{tk}^k)} \hspace{0.3in}	&& 	\lambda_k^+ - \lambda_k^-  +\mu_k + \sigma_{k}^- - \sigma_{k}^+ - \sum_{j=1}^{N} a_j^k \delta_{kj} = 0, \slabel{eqn:EDd2}\\
\rred{(q_{tk}^s)} \hspace{0.3in}	&& 	\lambda_s^+ - \lambda_s^-  + \sigma_{s}^- - \sigma_{s+1}^-  - \sigma_{s}^+ + \sigma_{s+1}^+ - \sum_{j=1}^{N} a_j^s \delta_{sj} = 0, \ \forall s \in [t+1,k-1]_{\Z}, \slabel{eqn:EDd3}\\
\rred{(w_{tk}^s)} \hspace{0.3in}	&& \sum_{j=1}^{N} \delta_{sj} = 1, \ \forall s \in [t,k]_{\Z}, \slabel{eqn:EDd4}\\
	&& \lambda_s^\pm \leq 0, \ \forall s \in [t,k]_{\Z}, \ \mu_t \leq 0, \ \mu_k \leq 0, \ \sigma_{s}^\pm \leq 0, \ \forall s \in [t+1,k]_{\Z}, \nonumber\\
	&& \delta_{sj}\geq 0, \ \forall j \in [1,N]_{\Z}, s \in [t,k]_{\Z}, \slabel{eqn:EDd5}
\end{subeqnarray}
where $ \lambda_s^- $ and $ \lambda_s^+ $ are dual variables corresponding to constraints \eqref{eqn:EDlow} and \eqref{eqn:EDupp}, respectively, $ \mu_t $ and $ \mu_k $ are the dual variables corresponding to constraint \eqref{eqn:EDram1} and \eqref{eqn:EDram2}, respectively, $\sigma_{s}^+ $ and $ \sigma_{s}^- $ are dual variables corresponding to constraints \eqref{eqn:EDramup} and \eqref{eqn:EDramdo}, respectively, and $ \delta_{sj} $ are dual variables corresponding to constraints \eqref{eqn:EDlin}. \rred{Note that all of these dual variables are correspondingly labeled in the brackets to the left-hand side of \eqref{model:ED} for an easy reference.} For $k = T$, we obtain the corresponding dual formulation by removing the dual variable $ \mu_k $ from model \eqref{model:EDd}. \rred{Thus, in the following part of this section, we refer to \eqref{model:EDd} as the dual formulation for all possible $k$, where $ \mu_k $ will be removed from \eqref{model:EDd} when $k=T$.}
Now we obtain an integrated linear program, as shown in the following, by plugging the dual formulation of {the} economic dispatch problem and {redefining} $C(t,k)$ to be a decision variable in the following model.
\begin{subeqnarray} \label{model:LP_ED}
	& \max  &  z \slabel{eqn:LP_EDobj}\\
	& \mbox{s.t.}  &  \hspace{-0.1in} \eqref{eqn:LP1} - \eqref{eqn:LP5}, \slabel{eqn:LP_ED1}\\
\rred{(p_{tk})} \hspace{-0.2in}	& &  C(t,k) \leq \sum_{s=t}^{k} \bigg( \lambda_s^+ \overline{C} - \lambda^-_s \underline{C} \bigg) + \overline{V} (\mu_t+\mu_k) + \sum_{s=t+1}^{k} V \bigg( \sigma_s^+ + \sigma_s^- \bigg) + \sum_{s=t}^{k}\sum_{j=1}^{N} b_j \delta_{sj}, \nonumber \\
	&& \hspace{1.85in}  \forall k \in [\min \{t+L-1,T-1\}, T-1]_{\Z}, \forall t \in [1, T]_{\Z}, \slabel{eqn:LP_ED2}\\
\rred{(p_{tk})} \hspace{-0.2in}	&& C(t,T) \leq \sum_{s=t}^{T} \bigg( \lambda_s^+ \overline{C} - \lambda^-_s \underline{C} \bigg) + \overline{V} \mu_t + \sum_{s=t+1}^{T} V \bigg( \sigma_s^+ + \sigma_s^- \bigg) + \sum_{s=t}^{T}\sum_{j=1}^{N} b_j \delta_{sj}, \nonumber \\
	&& \hspace{4.3in} \forall t \in [1, T]_{\Z}, \slabel{eqn:LP_ED22}\\
	&& \eqref{eqn:EDd1} - \eqref{eqn:EDd5}, \  \forall t \in [1, T]_{\Z}, \forall k \in [\min \{t+L-1,T\}, T]_{\Z}.\slabel{eqn:LP_ED3}
\end{subeqnarray}
Note here that the right-hand sides of constraints \eqref{eqn:LP_ED2} and \eqref{eqn:LP_ED22} correspond to the objective function \eqref{eqn:EDd_obj} under the case in which $k \leq T-1$ and the case in which $k = T$, respectively.

In the following, we obtain an integral polytope for the original deterministic \rred{single-}UC model \eqref{model:Nduc}. Before that, we take the dual of the above linear program \eqref{model:LP_ED} and obtain the following dual linear program:
\begin{subeqnarray} \label{model:LP_D1}
	&\min  & \sum_{t=1}^{T} SU(s_0 + t -1) \alpha_t + \sum_{t=1}^{T} \sum_{k=t+L-1}^{T-1} SD(k-t+1) \beta_{tk} +\nonumber\\
	&& \ \sum_{t=L}^{T-\ell -1} \sum_{k=t+\ell+1}^{T} SU(k-t-1) \gamma_{tk} + \sum_{tk \in \T \K }  \sum_{s=t}^{k} w_{tk}^s  \slabel{eqn:LP_D1obj}\\
	&\mbox{s.t.}  & \sum_{t=1}^{T} \alpha_t \leq 1, \slabel{eqn:LP_D11}\\
	&& -\alpha_t + \sum_{k= \rblue{\min\{ t+L-1, T\}} }^{T} \beta_{tk} = 0,\ \forall t \in [1, L+\ell]_{\Z}, \slabel{eqn:LP_D12}\\
	&& -\alpha_t + \sum_{k=\rblue{\min\{ t+L-1, T\}}}^{T} \beta_{tk} - \sum_{k=L}^{t-\ell -1} \gamma_{kt} = 0,\ \forall t \in [L+\ell +1, T]_{\Z}, \slabel{eqn:LP_D13}\\
	&& -\sum_{k=1}^{t-L+1}\beta_{kt} + \sum_{k=t+\ell+1}^{T}\gamma_{tk} \leq 0, \ \forall t \in [L, T-\ell -1]_{\Z}, \slabel{eqn:LP_D14}\\
	&& \theta_t - \sum_{k=1}^{t-L+1}\beta_{kt} = 0, \ \forall t \in [T- \ell, T]_{\Z}, \slabel{eqn:LP_D15}\\
	&& p_{tk} - \beta_{tk} =0, \ \forall tk \in \T\K, \slabel{eqn:LP_D16}\\
	&& q_{tk}^s \leq \overline{C}p_{tk}, \ \forall s \in [t,k]_{\Z}, \forall tk \in \T\K, \slabel{eqn:LP_D17}\\
	&& -q_{tk}^s \leq - \underline{C}p_{tk}, \ \forall s \in [t,k]_{\Z}, \forall tk \in \T\K,   \slabel{eqn:LP_D18}\\
	&& q_{tk}^t \leq \overline{V}p_{tk}, \ \forall tk \in \T\K, \slabel{eqn:LP_D19}\\
	&& q_{tk}^k \leq \overline{V}p_{tk}, \ \forall tk \in \T\K, k\leq T-1 \slabel{eqn:LP_D191}\\
	&& q_{tk}^{s-1} - q_{tk}^s \leq V p_{tk}, \  \forall s \in [t+1,k]_{\Z}, \forall tk \in \T\K, \slabel{eqn:LP_D110}\\
	&& q_{tk}^{s} - q_{tk}^{s-1} \leq V p_{tk}, \  \forall s \in [t+1,k]_{\Z}, \forall tk \in \T\K, \slabel{eqn:LP_D111}\\
	&& w_{tk}^s - a_j^s q_{tk}^{s} \geq b_j p_{tk}, \ \forall s \in [t,k]_{\Z}, j \in  [1,N]_{\Z}, \forall tk \in \T\K, \slabel{eqn:LP_D112}\\
	&& \alpha, \beta, \gamma,p \geq 0,
\end{subeqnarray}
where $ \T\K $ represents the set of all possible combinations of each $t \in [1, T]_{\Z}$ and each $k \in [\min \{t+L-1,T\}, T]_{\Z}$ to construct a time interval $[t,k]_{\Z}$. In the above dual formulation, dual variables $ \alpha, \beta, \gamma$, and $\theta $ \rred{(labeled in the brackets to the left-hand side of \eqref{model:LP})} correspond to constraints \eqref{eqn:LP1} -- \eqref{eqn:LP4}, respectively, and dual variables $ p,q$, and $w$ \rred{(labeled in the brackets to the left-hand sides of \eqref{model:EDd} and \eqref{model:LP_ED})} correspond to constraints \eqref{eqn:LP_ED2} -- \eqref{eqn:LP_ED3} for each $ tk \in \T\K $, respectively.


After replacing $ p $ with $ \beta $ (due to \eqref{eqn:LP_D16}) and combining \eqref{eqn:LP_D12} and \eqref{eqn:LP_D13} in the dual formulation \eqref{model:LP_D1}, we obtain the following cleaner model:
\begin{subeqnarray} \label{model:LP_D2}
	&\min  & \sum_{t=1}^{T} SU(s_0 + t -1) \alpha_t + \sum_{t=1}^{T} \sum_{k=t+L-1}^{T-1} SD(k-t+1) \beta_{tk} +\nonumber\\
	&& \ \sum_{t=L}^{T-\ell -1} \sum_{k=t+\ell+1}^{T} SU(k-t-1) \gamma_{tk} + \sum_{tk \in \T \K }  \sum_{s=t}^{k} \rblue{w_{tk}^s}  \slabel{eqn:LP_D2obj}\\
	&\mbox{s.t.}  & \sum_{t=1}^{T} \alpha_t \leq 1, \slabel{eqn:LP_D21}\\
	&& -\alpha_t + \sum_{k=\rblue{\min\{ t+L-1, T\}}}^{T} \beta_{tk} - \sum_{k=L}^{t-\ell -1} \gamma_{kt} = 0,\ \forall t \in [1, T]_{\Z}, \slabel{eqn:LP_D22}\\
	&& -\sum_{k=1}^{t-L+1}\beta_{kt} + \sum_{k=t+\ell+1}^{T}\gamma_{tk} \leq 0, \ \forall t \in [L, T-\ell -1]_{\Z}, \slabel{eqn:LP_D23}\\
	&& \theta_t - \sum_{k=1}^{t-L+1}\beta_{kt} = 0, \ \forall t \in [T- \ell, T]_{\Z}, \slabel{eqn:LP_D24}\\
	&& \underline{C}\beta_{tk} \leq q_{tk}^s \leq \overline{C}\beta_{tk}, \ \forall s \in [t,k]_{\Z}, \forall tk \in \T\K, \slabel{eqn:LP_D25}\\
	&& q_{tk}^t \leq \overline{V}\beta_{tk}, \ \forall tk \in \T\K, \slabel{eqn:LP_D26}\\
	&& q_{tk}^k \leq \overline{V}\beta_{tk}, \ \forall tk \in \T\K, k \leq T-1 \slabel{eqn:LP_D261}\\
	&& q_{tk}^{s-1} - q_{tk}^s \leq V \beta_{tk}, \  \forall s \in [t+1,k]_{\Z}, \forall tk \in \T\K, \slabel{eqn:LP_D27}\\
	&& q_{tk}^{s} - q_{tk}^{s-1} \leq V \beta_{tk}, \  \forall s \in [t+1,k]_{\Z}, \forall tk \in \T\K, \slabel{eqn:LP_D28}\\
	&& \rblue{w_{tk}^s - a_j^s q_{tk}^{s} \geq b_j \beta_{tk}, \ \forall s \in [t,k]_{\Z}, j \in  [1,N]_{\Z}, \forall tk \in \T\K;} \ \alpha, \beta, \gamma \geq 0. \slabel{eqn:LP_D29}
\end{subeqnarray}

Now, we show that the polytope \eqref{eqn:LP_D21} --  \eqref{eqn:LP_D29} is an integral polytope in the following theorem, which indicates that the extreme points of the polytope are integral.

\begin{theorem} \label{lemma:binary solution_new}
The extreme points of the polytope \eqref{eqn:LP_D21} -- \eqref{eqn:LP_D29} are binary with respect to decision variables $ \alpha, \beta, \gamma$ and $\theta$.
\end{theorem}
\begin{proof}
To prove Theorem \ref{lemma:binary solution_new}, \rblue{since} model \eqref{model:LP_D2} has a linear objective function so that the optimal solution lies in the extreme point set and we only need to show that for any arbitrary linear objective function, the optimal solutions of model \eqref{model:LP_D2} are binary with respect to $ \alpha, \beta, \gamma$, and $\theta$. To that end, without loss of generality, we make the following assumption without modifying the polytope \eqref{eqn:LP_D21} -- \eqref{eqn:LP_D29}.
	\begin{itemize}[nolistsep]
		\item ${w_{tk}^s}  = a_{tk}^{s} q_{tk}^{s} + b_{tk}^{s} \beta_{tk} $, where $ a_{tk}^{s}  $ and $ b_{tk}^{s} $ are  parameters;
		\item $ V_\downarrow (t) = E_t, \ \forall t \in [T-\ell, T]_{\Z}, $ where $ E_t $ is a parameter.
	\end{itemize}
	Based on the above assumptions, we can obtain model \eqref{model:LP_D2} with the same set of constraints \eqref{eqn:LP_D21} -- \eqref{eqn:LP_D29} \rblue{without the term included in \eqref{eqn:LP_D112}} but with the following objective function:
	\begin{eqnarray}
		&\min & \sum_{t=1}^{T} SU(s_0 + t -1) \alpha_t  + \sum_{t=1}^{T} \sum_{k=t+L-1}^{T-1} SD(k-t+1) \beta_{tk} + \sum_{t=L}^{T-\ell -1} \sum_{k=t+\ell+1}^{T} SU(k-t-1) \gamma_{tk}\nonumber\\ 
		&&+ \sum_{t=T-\ell}^{T}E_t \theta_t + \sum_{tk \in \T \K }  \sum_{s=t}^{k} \bigg( a_{tk}^{s} q_{tk}^{s} + b_{tk}^{s} \beta_{tk} \bigg), \slabel{obj_pf}
	\end{eqnarray}
where the coefficient of each variable can be an arbitrary number.

	For notation brevity, we denote \eqref{obj_pf} as $\min c^{\top} (\alpha,\beta,\gamma,\theta,q) $ where $ c $ is the column vector including all {coefficients} in \eqref{obj_pf}. Now we prove {that} for any value of $ c $, we can provide an optimal solution that is integral with respect to $\alpha,\beta,\gamma,\theta  $ to the linear program with \eqref{obj_pf} as the objective function and \eqref{eqn:LP_D21} -- \eqref{eqn:LP_D29} as constraints, which means that Theorem \ref{lemma:binary solution_new} holds.
	
Considering the assumptions we make, we can obtain an optimal solution with the dynamic programming algorithm \eqref{new_dp1} -- \eqref{new_dp2} for any given $c$. Based on this optimal solution, which indicates the online/offline status and generation amount of the {generator} at each time period, we construct a solution $(\alpha^*,\beta^*,\gamma^*,\theta^*,q^*)$ as follows:
\begin{itemize}[noitemsep]
\item[1)] $ \alpha^*_{t} =1$ if the {generator} starts up for the first time at time $ t $ and $ \alpha^*_{t} =0 $ otherwise.
\item[2)] $ \beta^*_{tk} =1 $ if the {generator} starts up at time $ t $ and shuts down at time $ k + 1 $ and $ \beta^*_{tk} =0 $ otherwise.
\item[3)] $ \gamma^*_{tk} =1 $ if the {generator} shuts down at time $ t + 1 $ and starts up at time $ k $ and $ \gamma^*_{tk} =0 $ otherwise.
\item[4)] $ \theta^*_{t} =1 $ if the {generator} shuts down at time $t + 1$ and stays offline to the end and $ \theta^*_{t} =0 $ otherwise.
\item[5)] $q^{s*}_{tk}$ takes the value of optimal generation output for each $s \in [t,k]_{\Z}$ if the {generator} starts up at time $ t $ and shuts down at time $ k +1 $ and $q^{s*}_{tk} = 0$ otherwise.
\end{itemize}
In the following, we show $(\alpha^*,\beta^*,\gamma^*,\theta^*,q^*)$ is an optimal solution of model \eqref{model:LP_D2} with objective function \eqref{obj_pf} and constraints \eqref{eqn:LP_D21} -- \eqref{eqn:LP_D29}.

We first verify the feasibility. Since $ \alpha^*_t =1 $ for at most one period $t \in [1,T]_{\Z}$, constraint \eqref{eqn:LP_D21} is satisfied.
For each $ t \in [1,T]_{\Z} $ in constraints \eqref{eqn:LP_D22}, we consider the following two possible cases: 
\begin{itemize}[nolistsep, leftmargin=*]
\item[1)] if $ \beta^*_{tk} = 0 $ for all $k \in [t+L-1,T]_{\Z}$, by definition, $ \alpha^*_t=0 $ and $ \gamma^*_{kt} = 0 $ for all possible $k \in [L,t-\ell-1]_{\Z}$ if $t \geq L+\ell+1$.
\item[2)] otherwise $ \beta^*_{tk} = 1 $ for some $k \in [t+L-1,T]_{\Z}$, then if the {generator} starts up at time $ t $ for the first time, we have $ \alpha^*_t =1 $ and $ \gamma^*_{kt} = 0 $ for all possible $k \in [L,t-\ell-1]_{\Z}$ if $t \geq L+\ell+1$; else $ \alpha^*_t =0 $ and there exists exactly one $k \in [L,t-\ell-1]_{\Z}$ such that $ \gamma^*_{kt} = 1 $.
\end{itemize}
For both cases, constraints \eqref{eqn:LP_D22} are satisfied.
For each $ t \in [L, T-\ell -1]_{\Z}$ in constraints \eqref{eqn:LP_D23}, we consider the following two possible cases:
\begin{itemize}[nolistsep, leftmargin=*]
\item[1)] if $ \beta^*_{kt} = 0$ for all $k \in [1, t-L+1]_{\Z}$, then $ \gamma_{tk}^*  =0 $ for all $k \in [t+\ell+1, T]_{\Z}$.
\item[2)] otherwise $ \beta^*_{kt} = 1 $ for some $k \in [1, t-L+1]_{\Z}$, meaning that the {generator} shuts down at $t+1$, then it either start up again after satisfying the minimum-down time limit, which indicates that $  \sum_{k=t+\ell+1}^{T}\gamma_{tk}^* =1 $, or stay offline to the end, which indicates $  \sum_{k=t+\ell+1}^{T}\gamma_{tk}^* =0 $.
\end{itemize}
For both cases, constraints \eqref{eqn:LP_D23} are satisfied.
For each $ t \in [T- \ell, T]_{\Z}$ in constraints \eqref{eqn:LP_D24}, we consider the following two possible cases:
\begin{itemize}[nolistsep, leftmargin=*]
\item[1)] if $ \beta^*_{kt} = 1 $ for some $k \in [1,t-L+1]_{\Z}$, then the {generator} cannot start up again after shutting down at $t+1$ due to the minimum-down time limit, it follows that $ \theta_t^* = 1 $.
\item[2)] otherwise $ \beta^*_{kt} = 0$ for all $k \in [1,t-L+1]_{\Z}$, then $ \theta_t^* = 0$.
\end{itemize}
For both cases, constraints \eqref{eqn:LP_D24} are satisfied.
For constraints \eqref{eqn:LP_D25} -- \eqref{eqn:LP_D28}, they are immediately satisfied by the construction of our solution and the definition of the economic dispatch problem. Also, \eqref{eqn:LP_D29} is satisfied obviously.
	
We then verify the optimality. We claim that the objective function \eqref{obj_pf} under the constructed solution equals to the objective value of the dynamic programming algorithm \eqref{new_dp1} -- \eqref{new_dp2} {as follows}.
\begin{subeqnarray}
	&&\sum_{t=1}^{T} SU(s_0 + t -1) \alpha_t^* +  \sum_{t=1}^{T} \sum_{k=t+L-1}^{T-1} SD(k-t+1) \beta_{tk}^* + \sum_{t=L}^{T-\ell -1} \sum_{k=t+\ell+1}^{T} SU(k-t-1) \gamma_{tk}^* \nonumber \\
	&& \ + \sum_{t=T-\ell}^{T}E_t \theta_t^* + \sum_{tk \in \T \K }  \sum_{s=t}^{k} \bigg(a_{tk}^{s} q_{tk}^{s*} + b_{tk}^{s} \beta_{tk}^* \bigg)  \nonumber \\
	& = & SU(s_0 + t_1 -1)  +  \sum_{tk: \beta_{tk}^*=1, k\leq T-1} SD(k-t+1) + \sum_{tk: \gamma_{tk}^*=1} SU(k-t-1)  + E_{t_2} \nonumber \\
	&& \  + \sum_{tk: \beta_{tk}^* =1 }  \sum_{s=t}^{k} \bigg( a_{tk}^{s} q_{tk}^{s*} + b_{tk}^{s} \bigg) \slabel{pf1} \\
	& = & SU(s_0 + t_1 -1)  +  \sum_{tk: \beta_{tk}^*=1, k\leq T-1} SD(k-t+1) + \sum_{tk: \gamma_{tk}^*=1} SU(k-t-1)  + E_{t_2} \nonumber \\
	&& \ + \sum_{tk: \beta_{tk}^* =1 }  C(t,k) \slabel{pf2}\\
	& = & V_\downarrow (-s_0), \slabel{pf3}
\end{subeqnarray}
where $t_1$ and $t_2$ in \eqref{pf1} represent $ \alpha_{t_1}^* = 1$ and $ \theta_{t_2}^* = 1$, respectively, \eqref{pf1} and \eqref{pf2} hold due to our assumption at the beginning of the proof, and \eqref{pf3} holds because $(\alpha^*,\beta^*,\gamma^*,\theta^*,q^*)$ is constructed based on the optimal solution of the dynamic programming algorithm \eqref{new_dp1} -- \eqref{new_dp2} {and is actually the expansion of the objective function in \eqref{new_dp2}}.
By the Strong Duality Theorem, the constructed solution $(\alpha^*,\beta^*,\gamma^*,\theta^*,q^*) $ is an optimal solution for model \eqref{model:LP_D2}.

From the above analysis, we notice that  $(\alpha^*,\beta^*,\gamma^*,\theta^*,q^*) $ is binary with respect to $ \alpha, \beta, \gamma$ and $\theta$ and optimal for the dual program for all possible cost coefficient $c$. Thus, we have proved our claim.
\end{proof}

\begin{proposition} \label{cor:binary solution_new}
\rblue{There exists an} optimal solution {to the dual formulation} \eqref{model:LP_D2} binary with respect to decision variables $ \alpha, \beta, \gamma$ and $ \theta $.
\end{proposition}
\begin{proof}
\rblue{The conclusion directly follows from Theorem \ref{lemma:binary solution_new} and linear objective function \eqref{eqn:LP_D2obj}.}
\end{proof}

From Theorem \ref{lemma:binary solution_new}, we can further observe that this formulation itself has specific physical meanings. In particular, we notice that the optimal solution $ \alpha^*_{t} $ in the dual program represents whether at time $ t $ the {generator} starts up for the first time or not. If yes, then $ \alpha^*_{t} =1 $; otherwise, $ \alpha^*_{t} =0 $. Similarly, if the {generator} starts up from time $ t $ and shuts down at time $ k+1 $, $ \beta^*_{tk} =1 $; otherwise, $ \beta^*_{tk} =0 $. If the {generator} shuts down at time $ t+1 $ and starts up again at time $ k $, $ \gamma^*_{tk} =1 $; otherwise, $ \gamma^*_{tk} =0 $. If the {generator} shuts down at time $ t+1 $ and stays offline to the end, $ \theta^*_t =1 $; otherwise, $ \theta^*_t =0 $. That is, this theorem not only provides an integral polytope for the deterministic \rred{single-}UC problem but also provides an insight to formulate the problem in a different way by linking the original space of the deterministic \rred{single-}UC problem with the derived reformulation \eqref{model:LP_D2}. It follows that we have the following proposition.

\begin{proposition} \label{thm: opt solution_new}
	If  $(\alpha^*,\beta^*,\gamma^*,\theta^*,q^*) $  is an optimal solution to {the} dual program \eqref{model:LP_D2}, then
	\begin{eqnarray}
		&& x_s^* = \sum_{tk \in \T\K, t\leq s \leq k} q^{s*}_{tk}, \ y_s^* = \sum_{tk \in \T\K, t\leq s \leq k} \beta^{*}_{tk},\  u_s^* = \alpha_s^* + \sum_{tk \in \T\K, k= s} \gamma^{*}_{tk}, \ \forall s \in [1,T]_{\Z} \slabel{eqn:opt solution_new} 
	\end{eqnarray}
	is an optimal solution to the deterministic \rred{single-}UC problem \eqref{model:Nduc}.
\end{proposition}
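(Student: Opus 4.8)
The plan is to show that the decoding \eqref{eqn:opt solution_new} carries an optimal point of \eqref{model:LP_D2} to a feasible point of \eqref{model:Nduc} whose cost equals the common optimal value of the two models, so that it must be optimal for \eqref{model:Nduc}. I would start from Corollary~\ref{cor:binary solution_new}, which lets me treat $\alpha^*,\beta^*,\gamma^*,\theta^*$ as $0/1$, and read off the schedule they encode: the index set $\T\K$ forces every active pair $(t,k)$ (i.e.\ $\beta^*_{tk}=1$) to satisfy $k\ge t+L-1$, each active $\gamma^*_{tk}$ satisfies $k\ge t+\ell+1$, the single-source bound \eqref{eqn:LP_D21} allows at most one $\alpha^*_{t}=1$, and the balance equations \eqref{eqn:LP_D22} together with \eqref{eqn:LP_D23}--\eqref{eqn:LP_D24} force the active pairs to be pairwise disjoint closed intervals chained in a single path — opened by the unique first start-up $t_1$ (with $\alpha^*_{t_1}=1$, or else the machine is never on), bridged across gaps of length $\ge\ell$ by the active $\gamma^*$'s, and closed either by an active $\theta^*$ (off until $T$) or by reaching $k=T$.

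Next I would check \eqref{model:Nduc} constraint by constraint for $(x^*,y^*,u^*)$ together with the obvious completion $SU_t,SD_t$ equal to the profile values $SU(\cdot),SD(\cdot)$ read from the decoded schedule (and $0$ otherwise). Disjointness of the on-intervals gives $y^*_s\in\{0,1\}$, equal to $1$ exactly on their union; the interval lengths ($\ge L$) and gap lengths ($\ge\ell$) then yield \eqref{eqn:Np-minup}--\eqref{eqn:Np-mindn}, and \eqref{eqn:Np-udef} holds with $u^*$ marking precisely the start points of the on-intervals — the re-starts through the active $\gamma^*$'s and the very first start-up through $\alpha^*$. The generation bounds \eqref{eqn:Np-lower-bound}--\eqref{eqn:Np-upper-bound} follow from \eqref{eqn:LP_D25}; the in-interval ramping bounds \eqref{eqn:Np-ramp-up}--\eqref{eqn:Np-ramp-down} follow from \eqref{eqn:LP_D27}--\eqref{eqn:LP_D28}, while at the two ends of an interval the start-up/shut-down ramp enters via \eqref{eqn:LP_D26}--\eqref{eqn:LP_D261} and on an off-period $x^*$ is $0$; and \eqref{eqn:Nsu1}--\eqref{eqn:Nsd} hold because the off-run and on-run lengths extracted from the schedule are exactly the arguments of $SU(\cdot)$ and $SD(\cdot)$ occurring there.

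To close, I would match objective values. Substituting the decoding into the objective of \eqref{model:Nduc}, the start-up terms collapse to $SU(s_0+t_1-1)+\sum_{tk:\gamma^*_{tk}=1}SU(k-t-1)$, the shut-down terms to $\sum_{tk:\beta^*_{tk}=1,\,k\le T-1}SD(k-t+1)$, and the fuel terms to $\sum_{tk:\beta^*_{tk}=1}\sum_{s=t}^{k}f_s(q^{s*}_{tk})$. Since $(\alpha^*,\beta^*,\gamma^*,\theta^*,q^*)$ is optimal for \eqref{model:LP_D2} and, for an active pair, the constraints \eqref{eqn:LP_D25}--\eqref{eqn:LP_D28} on $q_{tk}$ are exactly the economic-dispatch feasible set, the value $\sum_{s}f_s(q^{s*}_{tk})=\sum_s w^{s*}_{tk}$ is the minimum, namely $C(t,k)$; hence the \eqref{model:Nduc}-objective at $(x^*,y^*,u^*)$ equals the optimal value of \eqref{model:LP_D2}. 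That value equals $V_\downarrow(-s_0)$ by LP strong duality (the optimum of \eqref{model:LP_D2} equals that of \eqref{model:LP_ED}, itself built from the economic-dispatch dual), and $V_\downarrow(-s_0)$ is the optimal value of \eqref{model:Nduc} by correctness of the dynamic program of Section~\ref{subsec:t3-alg}; letting $N\to\infty$ restores the general convex cost. Thus $(x^*,y^*,u^*)$ is feasible and attains the optimum, so it is optimal.

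I expect the main obstacle to be the feasibility bookkeeping at the seams where $y^*$ switches — confirming the ramp inequalities \eqref{eqn:Np-ramp-up}--\eqref{eqn:Np-ramp-down} and the cost inequalities \eqref{eqn:Nsu1}--\eqref{eqn:Nsd} at those boundaries, and in particular that $u^*$, built only from the $\gamma^*$'s, still encodes a consistent first start-up once $\alpha^*$ is included — together with the more clerical task of keeping the strong-duality and DP-correctness chain honest while passing between the piecewise-linear surrogate used to build \eqref{model:LP_D2} and the general convex cost of \eqref{model:Nduc}.
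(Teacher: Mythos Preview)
Your plan is essentially the paper's own argument, only spelled out in far more detail: the paper invokes Corollary~\ref{cor:binary solution_new} to obtain binary $\alpha^*,\beta^*,\gamma^*,\theta^*$, asserts in one line that the decoded $(x^*,y^*,u^*)$ satisfies \eqref{eqn:Np-minup}--\eqref{eqn:Np-ramp-down}, and then matches the two objectives term by term exactly as you describe. Your concern about $u^*$ missing the very first start-up is well placed --- as written, $u^*_s=\sum_{tk:k=s}\gamma^*_{tk}$ gives $0$ at the time $t_1$ where $\alpha^*_{t_1}=1$, violating \eqref{eqn:Np-udef}; the paper's proof simply does not address this, so you should expect to need $u^*_s=\alpha^*_s+\sum_{tk:k=s}\gamma^*_{tk}$ (or an equivalent fix) when you carry the argument through.
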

\begin{proof}
	From {Proposition} \ref{cor:binary solution_new}
	and  constraints \eqref{eqn:LP_D25} -- \eqref{eqn:LP_D28},  we can easily conclude that both $ y* $ and $ u^* $ are binary and $(x^*, y^*, u^*)$ satisfies constraints  \eqref{eqn:Np-minup} -- \eqref{eqn:Np-ramp-down}. That is, $(x^*, y^*, u^*)$ is feasible to the deterministic \rred{single-}UC problem. Meanwhile, through plugging $(x^*, y^*, u^*)$ into \eqref{eqn:Nduc_obj}, we can observe that 
	\begin{eqnarray}
	&&	\sum_{t=1}^{T} \bigg( SU_t + f_t(x_t^*, y_t^*) \bigg) + \sum_{t=L}^{T-1} SD_t  \nonumber \\ 
	& = & \sum_{t=1}^{T} SU(s_0 + t -1) \alpha^*_t +  \sum_{tk: \beta_{tk}^*=1, k\leq T-1} SD(k-t+1) \nonumber \\
	&& + \sum_{t=L}^{T-\ell -1} \sum_{k=t+\ell+1}^{T} SU(k-t-1) \gamma^*_{tk} + \sum_{tk \in \T \K }  \sum_{s=t}^{k} { w_{tk}^{s*}} \nonumber
	\end{eqnarray}
	Thus, $(x^*, y^*, u^*)$ is optimal to the deterministic \rred{single-}UC problem.
\end{proof}

Now, based on Proposition \ref{cor:binary solution_new} 
and Proposition \ref{thm: opt solution_new}, we can construct the extended formulation of the deterministic \rred{single-}UC problem 
\rblue{by adding}
equations \eqref{eqn:opt solution_new}  to represent the {relationship} between {the} original decisions and dual decision variables.
\begin{theorem}\label{thm:extend_new}
	The extended formulation of the deterministic \rred{single-}UC problem with a general \rblue{piecewise linear} convex cost function can be written as follows:
	\begin{eqnarray} 
		&\min  & 
		\rblue{\eqref{eqn:LP_D2obj}} \nonumber \\
		&\mbox{s.t.}  & x_s = \sum_{tk \in \T\K, t\leq s \leq k} q^{s}_{tk}, \ y_s = \sum_{tk \in \T\K, t\leq s \leq k} \beta_{tk}, \ u_s = \alpha_s + \sum_{tk \in \T\K, k=s} \gamma_{tk},  \ \forall s \in [1,T]_{\Z}, \nonumber \\
		&& 
		 \eqref{eqn:LP_D21} - \eqref{eqn:LP_D29}, 
	\end{eqnarray}
	and if $ (x^*,y^*,u^*,\alpha^*,\beta^*,\gamma^*,\theta^*,q^*) $ is an optimal solution to the extended formulation, then  $ (x^*,y^*,u^*) $ is an optimal solution to the deterministic \rred{single-}UC problem \eqref{model:Nduc}.
\end{theorem}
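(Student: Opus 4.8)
The plan is to prove Theorem~\ref{thm:extend_new} by sandwiching the optimal value of the extended formulation between the two sides of an objective-preserving correspondence with \eqref{model:Nduc}, and then reading off the projection claim. Write $z^{\mathrm{ext}}$ for the optimal value of the extended formulation, and recall from Proposition~\ref{thm: opt solution_new} (together with the strong-duality chain leading to \eqref{model:LP_D2}) that $\mathrm{opt}\eqref{model:LP_D2}=\mathrm{opt}\eqref{model:Nduc}$.

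\emph{Step 1 (every feasible point of \eqref{model:Nduc} lifts, so $z^{\mathrm{ext}}\le\mathrm{opt}\eqref{model:Nduc}$).} Given $(\bar x,\bar y,\bar u,\overline{SU},\overline{SD})$ feasible to \eqref{model:Nduc}, decode its schedule as a (possibly empty) sequence of maximal ``on'' intervals $[t_1,k_1],[t_2,k_2],\dots$ and set $\beta_{t_ik_i}=1$ on those intervals (else $0$), $\alpha_{t_1}=1$ (else $0$), $\gamma_{k_i+1,\,t_{i+1}}=1$ between consecutive intervals (else $0$), $\theta_t=1$ iff the last interval ends at $t+1$, and $q^{s}_{t_ik_i}=\bar x_s$ for $s\in[t_i,k_i]$ (all other $q$ equal to $0$). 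Verifying \eqref{eqn:LP_D21}--\eqref{eqn:LP_D24} for this choice is identical to the feasibility check in the proof of Lemma~\ref{lemma:binary solution_new}, and \eqref{eqn:LP_D25}--\eqref{eqn:LP_D28} follow from the bounds and ramp limits \eqref{eqn:Np-lower-bound}--\eqref{eqn:Np-ramp-down} restricted to each ``on'' interval. Keeping $(\bar x,\bar y,\bar u,\overline{SU},\overline{SD})$ and adding $(\alpha,\beta,\gamma,\theta,q)$ makes the linking equations \eqref{eqn:opt solution_new} hold and leaves the objective value literally unchanged, so the constructed point is extended-feasible with the same cost.

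\emph{Step 2 (an optimal extended point projects to an optimal point of \eqref{model:Nduc}, so equality holds).} Let $(x^*,y^*,u^*,SU^*,SD^*,\alpha^*,\beta^*,\gamma^*,\theta^*,q^*)$ be optimal for the extended formulation. Since $SU^*,SD^*$ appear in the objective with positive coefficients and are restricted only by the lower bounds \eqref{eqn:Nsu1}--\eqref{eqn:Nsd}, at optimality each sits exactly on the upper envelope of those inequalities; substituting them out and using \eqref{eqn:opt solution_new}, the surviving problem in $(\alpha,\beta,\gamma,\theta,q)$ has feasible region \eqref{eqn:LP_D21}--\eqref{eqn:LP_D29}, and on the integral points of that region its objective reproduces \eqref{eqn:LP_D2obj} term by term: $\sum_t SU^*_t=\sum_t SU(s_0+t-1)\alpha^*_t+\sum_{tk}SU(k-t-1)\gamma^*_{tk}$ by \eqref{eqn:Nsu1}--\eqref{eqn:Nsu2} and the start-up-profile semantics, $\sum_t SD^*_t=\sum_{tk:\,k\le T-1}SD(k-t+1)\beta^*_{tk}$ by \eqref{eqn:Nsd}, and $\sum_s f_s(x^*_s,y^*_s)=\sum_{tk}\sum_s w^{s}_{tk}$ once $\beta^*$ is binary (with \eqref{eqn:LP_D25} forcing $q^{s}_{tk}=0$ off the active interval). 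By Corollary~\ref{cor:binary solution_new} --- applied to \eqref{model:LP_D2} and, via the identical construction, to the extended formulation --- the $(\alpha^*,\beta^*,\gamma^*,\theta^*)$ component is binary, so Proposition~\ref{thm: opt solution_new} applies to the point $(x^*,y^*,u^*)$ defined by \eqref{eqn:opt solution_new}: it is feasible to \eqref{model:Nduc} and its \eqref{eqn:Nduc_obj}-value equals $\mathrm{opt}\eqref{model:LP_D2}=\mathrm{opt}\eqref{model:Nduc}$. Together with Step~1 this forces $z^{\mathrm{ext}}=\mathrm{opt}\eqref{model:Nduc}$ and shows $(x^*,y^*,u^*)$ is optimal for \eqref{model:Nduc}. (The whole argument is run first under a piecewise-linear $f_t$, where the LP duality behind \eqref{model:LP_D2} is available, and then extended to a general convex $f_t$ by the $N\to\infty$/perspective passage set up just before \eqref{model:LP_D2}.)

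The step I expect to be the main obstacle is Step~2's claim that an optimal solution of the extended formulation may be taken with $\alpha,\beta,\gamma,\theta$ integral --- i.e., making precise that pushing $SU_t,SD_t$ down to the envelope of \eqref{eqn:Nsu1}--\eqref{eqn:Nsd} reduces the extended formulation \emph{exactly} to model~\eqref{model:LP_D2} so that Corollary~\ref{cor:binary solution_new} transfers, together with the bookkeeping that the aggregated start-up/shut-down costs really match $SU(s_0+t-1)$, $SU(k-t-1)$, $SD(k-t+1)$ through the profile constraints and that $f_t(x_t,y_t)$ splits as $\sum_{tk}\sum_s w^{s}_{tk}$ on integral $\beta$. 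Everything else --- feasibility of the lifted $(\alpha,\beta,\gamma,\theta,q)$ in Step~1 and of the projected $(x^*,y^*,u^*)$ in Step~2 --- is routine and essentially already carried out in the proofs of Lemma~\ref{lemma:binary solution_new} and Proposition~\ref{thm: opt solution_new}.
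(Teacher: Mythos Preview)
Your approach is essentially the same as the paper's, though more fully articulated. The paper's own proof is a two-line pointer: substitute the linking expressions \eqref{eqn:opt solution_new} for $(x^*,y^*,u^*)$ in the objective and in \eqref{SU_cost}, then invoke Corollary~\ref{cor:binary solution_new} and Proposition~\ref{thm: opt solution_new}. That is precisely your Step~2. Your Step~1 (lifting a feasible $(\bar x,\bar y,\bar u)$ of \eqref{model:Nduc} to the extended formulation to get $z^{\mathrm{ext}}\le\mathrm{opt}\eqref{model:Nduc}$) is not in the paper's proof at all; it is additional scaffolding that is not strictly required for the theorem as stated --- the paper is content to argue the projection direction only --- but it makes the equivalence of optimal values explicit and is harmless.

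Your self-identified obstacle is exactly the point the paper elides: that after pushing $SU_t,SD_t$ down to the active envelope of \eqref{eqn:Nsu1}--\eqref{eqn:Nsd} and substituting via \eqref{eqn:opt solution_new}, the surviving problem coincides with \eqref{model:LP_D2} (so that Corollary~\ref{cor:binary solution_new} transfers and the term-by-term cost match $\sum_t SU_t=\sum_t SU(s_0+t-1)\alpha_t+\sum_{tk}SU(k-t-1)\gamma_{tk}$, etc., goes through). The paper asserts this by fiat (``the conclusion holds immediately''), whereas you at least flag that the bookkeeping needs to be checked on integral $(\alpha,\beta,\gamma,\theta)$ and that the general-convex case is reached via the $N\to\infty$ passage. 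So your write-up is a faithful expansion of the paper's argument, not a different route.
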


\begin{remark}
From the reformulation described above, we can observe that our extended formulation not only describes an integral polytope with respect to variables $(y,u,\alpha,\beta,\gamma,\theta)$ due to Theorem \ref{lemma:binary solution_new}, 
but also provides an optimal objective converging to that of the deterministic single-UC problem \eqref{model:Nduc} when $f(\cdot)$ is a general convex function due to the compactness of the feasible region and bounded objective value for the single-UC problem and Theorem \ref{thm:extend_new}.
\end{remark}

\section{Deterministic \rred{Single-}UC with Piecewise Linear Cost Function} \label{deteruc}

In practice, the general convex cost function is usually approximated by a piecewise linear function.
In this section, we propose a more efficient dynamic programming algorithm {for this type of problem}. To explore the property more conveniently, we consider simplified start-up/shut-down cost {first}. That is, we consider {the} objective function in the following way:
\begin{eqnarray}
\min  & \sum_{t=1}^{T} \Big( \overline{U}u_t + \underline{U}(y_{t-1} - y_t + u_t) + f_t(x_t,y_t) \Big), \label{eqn:org_model_no_su_sd}
\end{eqnarray}
{where $\overline{U}$ and $\underline{U}$ represents the time-invariant start-up and shut-down costs, respectively.} Thus, here we have constraints \eqref{eqn:Np-minup} - \eqref{eqn:Np-ramp-down} plus \eqref{eqn:Np-nonnegativity} and decision variables $ x,y,u $.
We first explore the optimality condition of this problem and develop a new dynamic programming algorithm to solve the deterministic \rred{single-}UC problem. In addition, {an} extended formulation in a higher dimensional space is derived from our proposed algorithm.

\subsection{An Optimality Condition}
{We first derive a property for the extreme points of conv($\mathcal{D}$), where}
$\mathcal{D} = \{(x,y,u) \in \R^T \times \B^{2T}: \eqref{eqn:Np-minup} - \eqref{eqn:Np-ramp-down} \}$ {and conv($\mathcal{D}$) represents the convex hull description of $\mathcal{D}$,} {before exploring the optimality condition. We let}
 $\alpha_1 = \max \{n \in [1,T]_{\Z}: \Clower + nV \leq \Cupper\}$, $\alpha_2 = \max \{n \in [1,T]_{\Z}: \Vupper + nV \leq \Cupper\}$, and 
 \begin{eqnarray}
\mathcal{Q} = \{0, \ (\Clower+nV)_{n=0}^{\alpha_1}, \ (\Vupper+nV)_{n=0}^{\alpha_2}, \ (\Cupper-nV)_{n=0}^{\alpha_1}\}. \slabel{star}
 \end{eqnarray}
Note {here} that $\alpha_2 \leq \alpha_1 \leq T$ because $\Vupper \geq \Clower$. 

\begin{proposition} \label{prop:duc_ext_point}
For any extreme point $(\bar{x}, \bar{y}, \bar{u})$ of conv($\mathcal{D}$), $\bar{x}_t \in \mathcal{Q}$ for all $t \in [1,T]_{\Z}$.
\end{proposition}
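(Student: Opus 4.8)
The plan is to analyze the structure of an extreme point $(\bar x,\bar y,\bar u)$ of $\mathrm{conv}(\mathcal{D})$ by first observing that, since $\bar y$ and $\bar u$ are binary at an extreme point, the machine's on/off pattern is fixed; the continuous part $\bar x$ is then an extreme point of the polytope obtained by restricting to a single maximal ``on'' interval $[h,k]_{\Z}$ (the constraints decouple across distinct on-intervals, so it suffices to argue interval by interval, with $\bar x_t = 0$ forced whenever $\bar y_t = 0$). Within such an interval the active constraints among \eqref{eqn:Np-lower-bound}, \eqref{eqn:Np-upper-bound}, the start-up/shut-down ramp bounds $x_h \le \Vupper$, $x_k \le \Vupper$, and the in-region ramp constraints \eqref{eqn:Np-ramp-up}--\eqref{eqn:Np-ramp-down} must pin down $\bar x$ uniquely.

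Next I would run a standard ``tight-constraint chain'' argument on the interval. Consider the graph on $\{h,\dots,k\}$ whose edges join consecutive periods $s-1,s$ for which one of the ramp constraints $|x_s - x_{s-1}| = V$ is active. Since $\bar x$ restricted to the interval is determined by a full-rank set of active constraints, every period must be reachable (via such a chain of active ramp edges) from some period whose value is pinned by a one-sided bound: either $x_s = \Clower$ or $x_s = \Cupper$ (from \eqref{eqn:Np-lower-bound}/\eqref{eqn:Np-upper-bound}), or $x_h = \Vupper$, or $x_k = \Vupper$. Starting from such an ``anchor'' value $v_0 \in \{\Clower, \Vupper, \Cupper\}$ and propagating along the chain, each step adds or subtracts exactly $V$, so every $\bar x_s$ equals $v_0 \pm (\text{integer})\cdot V$. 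The range $[\Clower,\Cupper]$ then forces that integer to lie in $[0,\alpha_1]$ when the anchor is $\Clower$ or $\Cupper$, and in $[0,\alpha_2]$ when the anchor is $\Vupper$ (using $\alpha_2 \le \alpha_1$ and $\Vupper \ge \Clower$); this is precisely membership in $\mathcal{Q}$. The case $\bar x_s = 0$ is covered when $\bar y_s = 0$.

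The main obstacle is making the ``every period is anchored'' claim rigorous: I must rule out the possibility that a sub-block of the interval is determined by active ramp constraints alone without touching any of the three anchor values — such a free-floating arithmetic-progression block would be translatable, contradicting extremality, so it cannot occur, but the bookkeeping (counting degrees of freedom, verifying the active system has full rank iff every component is anchored) needs care, especially at the interval endpoints $h$ and $k$ where the $\Vupper$ bounds interact with the ramp bounds. A clean way to handle this is a perturbation argument: if some period's value were \emph{not} of the form anchor $\pm$ integer$\cdot V$, one could perturb $\bar x$ by $\pm\epsilon$ along a connected component of loosely-constrained periods and stay feasible, contradicting that $(\bar x,\bar y,\bar u)$ is an extreme point. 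I would also double-check the boundary endpoint conventions (e.g. $x_0 = y_0 = 0$, and that the first on-interval may start at $t=1$ with the ``start-up'' ramp bound $x_1 \le \Vupper$ active) so that the endpoint anchors $\Vupper$ are correctly available. Everything else is routine arithmetic on the admissible integer multiples of $V$.
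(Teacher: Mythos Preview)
Your anchoring-and-perturbation strategy is essentially the paper's own argument, repackaged: the paper too isolates a maximal block $[s_1,s_2]$ of periods linked by tight ramp constraints and shows the block can be shifted by $\pm\epsilon$, contradicting extremality. The step that fails in your outline is the claim that propagation from a $\Vupper$-anchor forces the signed offset into $[0,\alpha_2]$. A tight-ramp chain starting at a start-up period $h$ with $\bar x_h = \Vupper$ can go \emph{downward}: $\bar x_{h+1} = \Vupper - V$ is feasible whenever $\Vupper - V \ge \Clower$, and $\Vupper - V$ need not lie in $\mathcal{Q}$ (the definition of $\mathcal Q$ only lists $\Vupper + nV$ for $n\ge 0$, not $\Vupper - nV$). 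Nothing in the range constraint $[\Clower,\Cupper]$ forces the net offset to be nonnegative.

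A concrete instance: $T=3$, $L=\ell=1$, $\Clower=1$, $\Vupper=2.5$, $\Cupper=5$, $V=1$, $\bar y=(1,1,1)$, $\bar u=(1,0,0)$, $\bar x=(2.5,\,1.5,\,2.5)$. Three linearly independent constraints are active ($x_1=\Vupper$, $x_1-x_2=V$, $x_3-x_2=V$), so this is a vertex of the fiber polytope and hence an extreme point of $\mathrm{conv}(\mathcal D)$; yet $\bar x_2 = 1.5 \notin \mathcal Q=\{0,1,2,2.5,3,3.5,4,4.5,5\}$. The paper's proof makes the identical unsupported leap (asserting $k\in[0,t-s_1]_{\Z}$ in its case (1) when $\bar x_{s_1}=\Vupper$), so this is not a deficiency of your write-up relative to the paper; rather, the statement as written appears to require enlarging $\mathcal Q$ by the values $\Vupper - nV$ that remain in $[\Clower,\Cupper]$. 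With that correction your anchoring argument would go through verbatim.
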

\begin{proof}
{We prove this claim by a contradiction method.}
Suppose that there exists some $t \in [1,T]_{\Z}$ such that $\bar{x}_t \notin \mathcal{Q}$ for an extreme point $(\bar{x}, \bar{y}, \bar{u})$ of conv($\mathcal{D}$), i.e., $\bar{x}_t \in (\Clower, \Cupper) \setminus \mathcal{Q}$. In the following we construct two feasible points of conv($\mathcal{D}$) to {show that} $(\bar{x}, \bar{y}, \bar{u})$ is a convex combination of these two points, leading to the contradiction. If $t \geq 2$ and $|\bar{x}_t - \bar{x}_{t-1}| = V$, we let $s_1 \leq t-1$ be the smallest index such that $|\bar{x}_{s_1+1} - \bar{x}_{s_1}| = |\bar{x}_{s_1+2} - \bar{x}_{s_1+1}| = \cdots = |\bar{x}_t - \bar{x}_{t-1}| = V$; otherwise, we let $s_1 = t$. If $t \leq T-1$ and $|\bar{x}_{t+1} - \bar{x}_{t}| = V$, we let $s_2 \geq t+1$ be the largest index such that $|\bar{x}_{t+1} - \bar{x}_{t}| = |\bar{x}_{t+2} - \bar{x}_{t+1}| = \cdots = |\bar{x}_{s_2} - \bar{x}_{s_2-1}| = V$; otherwise, we let $s_2 = t$. We construct two points $(\bar{x}^1, \bar{y}, \bar{u})$ and $(\bar{x}^2, \bar{y}, \bar{u})$ such that $\bar{x}^1_r = \bar{x}_r + \epsilon$ for $r \in [s_1,s_2]_{\Z}$, $\bar{x}^2_r = \bar{x}_r - \epsilon$ for $r \in [s_1,s_2]_{\Z}$, and $\bar{x}^1_r = \bar{x}^2_r = \bar{x}_r$ for $r \notin [s_1,s_2]_{\Z}$, where $\epsilon$ is an arbitrarily small positive number.

Now we show these two points constructed are feasible for conv($\mathcal{D}$) by considering the following three possible cases.
\begin{enumerate}[(1)]
\item If $\bar{x}_{s_1-1} = 0$, then we have $\Clower < \bar{x}_{s_1} < \Vupper$. Otherwise, {we have the following two possible cases.}
	\begin{enumerate}[nolistsep]
	\item If $\bar{x}_{s_1} = \Clower$, it follows that $\bar{x}_t = \Clower + kV$ for some $k \in [0, t-s_1]_{\Z}$ due to $|\bar{x}_{s_1+1} - \bar{x}_{s_1}| = |\bar{x}_{s_1+2} - \bar{x}_{s_1+1}| = \cdots = |\bar{x}_t - \bar{x}_{t-1}| = V$. Since $\bar{x}_t \notin \mathcal{Q}$, it further follows that $k \geq \alpha_1+1 = \min \{T, \lfloor (\Cupper-\Clower)/V \rfloor \}+1$, which contradicts to the fact that $k \leq T$ and $\bar{x}_t < \Cupper$.
	\item If $\bar{x}_{s_1} = \Vupper$, it follows that $\bar{x}_t = \Vupper + kV$ for some $k \in [0, t-s_1]_{\Z}$ due to $|\bar{x}_{s_1+1} - \bar{x}_{s_1}| = |\bar{x}_{s_1+2} - \bar{x}_{s_1+1}| = \cdots = |\bar{x}_t - \bar{x}_{t-1}| = V$. Since $\bar{x}_t \notin \mathcal{Q}$, it further follows that $k \geq \alpha_2+1 = \min \{T, \lfloor (\Cupper-\Vupper)/V \rfloor \}+1$, which contradicts to the fact that $k \leq T$ and $\bar{x}_t < \Cupper$. 
	\end{enumerate}
Similarly, if $\bar{x}_{s_2+1} = 0$, then we have $\Clower < \bar{x}_{s_2} < \Vupper$. Therefore, in either case, it is feasible to increase or decrease $\bar{x}_{s_1}$ and $\bar{x}_{s_2}$ by $\epsilon$.
\item If $\bar{x}_{s_1-1} > 0$, then we have $\Clower < \bar{x}_{s_1} < \Cupper$. Otherwise, {we have the following two possible cases.}
	\begin{enumerate}[nolistsep]
	\item If $\bar{x}_{s_1} = \Clower$, we can similarly show the contradiction as above.
	\item If $\bar{x}_{s_1} = \Cupper$, it follows that $\bar{x}_t = \Cupper - kV$ for some $k \in [0, t-s_1]_{\Z}$ due to $|\bar{x}_{s_1+1} - \bar{x}_{s_1}| = |\bar{x}_{s_1+2} - \bar{x}_{s_1+1}| = \cdots = |\bar{x}_t - \bar{x}_{t-1}| = V$. Since $\bar{x}_t \notin \mathcal{Q}$, it further follows that $k \geq \alpha_1+1 = \min \{T, \lfloor (\Cupper-\Clower)/V \rfloor \}+1$, which contradicts to the fact that $k \leq T$ and $\bar{x}_t > \Clower$.
	\end{enumerate}
Similarly, if $\bar{x}_{s_2+1} > 0$, then we have $\Clower < \bar{x}_{s_2} < \Cupper$. Therefore, in either case, it is feasible to increase or decrease $\bar{x}_{s_1}$ and $\bar{x}_{s_2}$ by $\epsilon$ since $|\bar{x}_{s_1} - \bar{x}_{s_1-1}| < V$ and $|\bar{x}_{s_2+1} - \bar{x}_{s_2}| < V$ by definition.
\item If {$s_1-1, s_2+1 \notin [1,T]_{\Z}$}, i.e., $s_1 = 1$ or $s_2 = T$, then similarly we can follow the arguments above to show that $\Clower < \bar{x}_{s_1} < \Cupper$ and $\Clower < \bar{x}_{s_2} < \Cupper$. It follows that it is feasible to increase or decrease $\bar{x}_{s_1}$ and $\bar{x}_{s_2}$ by $\epsilon$.
\end{enumerate}

In summary, we show that in all cases it is feasible to increase or decrease $\bar{x}_{s_1}$ and $\bar{x}_{s_2}$ by $\epsilon$ and thus feasible to increase or decrease $\bar{x}_r$ by $\epsilon$ for all $r \in [s_1,s_2]_{\Z}$. It follows that both $(\bar{x}^1, \bar{y}, \bar{u})$ and $(\bar{x}^2, \bar{y}, \bar{u})$ are feasible points of conv($\mathcal{D}$), and $(\bar{x}, \bar{y}, \bar{u}) = \frac{1}{2}(\bar{x}^1, \bar{y}, \bar{u}) + \frac{1}{2}(\bar{x}^2, \bar{y}, \bar{u})$. Therefore, $(\bar{x}, \bar{y}, \bar{u})$ is not an extreme point of conv($\mathcal{D}$), which is a contradiction.
\end{proof}

Now, we begin to characterize the optimality condition for {the deterministic \rred{single-}UC} \eqref{eqn:org_model_no_su_sd}. Generally ${f_t(x_t,y_t)} = a x_t^2 + b x_t + c y_t - q_t x_t$, where $(a,b,c)$ is determined by the generator's physical characteristics, and it is often approximated by a $K-$piece piecewise linear function $\varphi_t = f_t(x_t,y_t) \geq \mu_k^t x_t + \nu_k y_t, \forall 1 \leq k \leq K$ so that the {\rred{single-}UC} problem can be formulated as {an MILP} model, where $\mu_k^t = 2a {\ddot{x}_k} + b - q_t$ and $\nu_k = c-a {\ddot{x}_k}^2$ with ${\ddot{x}_k}$ {being} the $x-$value corresponding to the {$k$\textit{th}} supporting node on the curve of $f_t(x_t,y_t)$ at each time period $t$ and ${\ddot{x}_1}  = \Clower, \ {\ddot{x}_K} = \Cupper$. Therefore, Model \eqref{eqn:org_model_no_su_sd} can be reformulated as
\begin{eqnarray}
&\min  & \sum_{t=1}^{T} \bigg( \overline{U}u_t + \underline{U}(y_{t-1} - y_t + u_t) + \varphi_t \bigg) \nonumber \\
&\mbox{s.t.}& (x,y,u) \in \mathcal{D}, \nonumber \\
&& \varphi_t \geq \mu_k^t x_t + \nu_k y_t, \ \forall k \in [1,K]_{\Z}, \forall t \in [1,T]_{\Z}. \label{eqn:duc_piecewise_cons}
\end{eqnarray}
Note {here} that the cost function ${\varphi_t}$ is a linear function if there is only one piece, i.e., $K=1$. It is easy to observe that any two adjacent pieces at each time period $t$, e.g., $\varphi_t \geq \mu_k^t x_t + \nu_k y_t$ and $\varphi_t \geq \mu_{k+1}^t x_t + \nu_{k+1} y_t$, intersect at $A_k=({\ddot{x}_k} + {\ddot{x}_{k+1}})/2$. Therefore, we can obtain $K-1$ {turning} points with $x-$value $A_k$, {for all} $k \in [1, K-1]_{\Z}$ on the $K-$piece piecewise linear function for each time period. We denote $ {\Lambda_k} = \{n \in [1,T]_{\Z}: \Clower \leq A_k + nV \leq \Cupper\}$ and
\begin{eqnarray}
\mathcal{Q}_d = \mathcal{Q} \cup \{ (A_k+nV)_{n \in {\Lambda_k}}, \forall k \in [1,K-1]_{\Z} \}.
\end{eqnarray}

\begin{proposition} \label{prop:duc_opt_sol}
Problem \eqref{eqn:org_model_no_su_sd} has at least one optimal solution $(\bar{x},\bar{y},\bar{u})$ with $\bar{x}_t \in \mathcal{Q}_d$ for all $t \in [1,T]_{\Z}$.
\end{proposition}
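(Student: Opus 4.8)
The plan is to start from an arbitrary optimal solution $(\bar x,\bar y,\bar u)$ of the piecewise‑linear reformulation \eqref{eqn:duc_piecewise_cons} and, keeping $\bar y$ and $\bar u$ fixed, repeatedly push the generation profile $\bar x$ towards $\mathcal{Q}_d$ while preserving feasibility and objective value --- exactly in the spirit of the proof of Proposition \ref{prop:duc_ext_point}. Since the start‑up/shut‑down terms of the objective depend only on $(\bar y,\bar u)$, any modification of $\bar x$ alone changes the objective by $\sum_t(\Phi_t(x_t)-\Phi_t(\bar x_t))$ over the ``on'' periods, where $\Phi_t(\cdot):=\max_{1\le k\le K}(\mu_k^t(\cdot)+\nu_k)$ is the optimal value of $\varphi_t$ when $y_t=1$; $\Phi_t$ is convex and piecewise linear with breakpoints $A_1,\dots,A_{K-1}$, which do not depend on $t$.

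First I would note that if $\bar x_t\notin\mathcal{Q}_d$ then $\bar y_t=1$ and $\bar x_t\in(\Clower,\Cupper)$ (because $0,\Clower,\Cupper\in\mathcal{Q}_d$) and $\bar x_t\ne A_k$ for all $k$. Form the maximal ``$V$‑ramp run'' $[s_1,s_2]\ni t$ as in the proof of Proposition \ref{prop:duc_ext_point} (all consecutive differences equal $\pm V$, and $[s_1,s_2]$ maximal with this property). Every coordinate of the run differs from $\bar x_t$ by an integer multiple of $V$, and $\mathcal{Q}_d\cap[\Clower,\Cupper]$ is closed under adding and subtracting $V$ (immediate from the definitions of $\mathcal{Q}$ and of $\{A_k+nV\}_{n\in\chi_k}$); hence no coordinate of the run lies in $\mathcal{Q}_d$, so in particular none is a breakpoint or a bound, and the run is contained in ``on'' periods.

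Next, shift the run uniformly: let $x(\epsilon)$ equal $\bar x$ off $[s_1,s_2]$ and equal $\bar x_r+\epsilon$ on it (with $\varphi$ updated to $\Phi_r(\bar x_r+\epsilon)$ on the run). The feasibility discussion in the proof of Proposition \ref{prop:duc_ext_point}, which uses nothing about $\mathcal{Q}$, shows $x(\epsilon)$ stays feasible for $\epsilon$ in some interval $[\epsilon^-,\epsilon^+]$ with $\epsilon^-<0<\epsilon^+$. On this segment the objective equals $\sum_{r=s_1}^{s_2}\Phi_r(\bar x_r+\epsilon)$ up to a constant, a convex piecewise‑linear function of $\epsilon$ whose kinks occur exactly when some $\bar x_r+\epsilon$ reaches a breakpoint $A_k$; since no run coordinate is a breakpoint, $\epsilon=0$ is not a kink, so near $0$ the objective is affine with slope $\sum_r\mu^r_{k(r)}$ (the active pieces), and optimality of $\bar x$ forces this slope to be $0$. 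Hence the objective is constant as $\epsilon$ decreases from $0$ until the first event at some $\bar\epsilon\in[\epsilon^-,0)$. Because $\epsilon$ is decreasing, the upper bounds $x_r\le\Cupper$, the start‑up/shut‑down caps $x_r\le\Vupper$, and the already‑tight intra‑run ramps cannot newly bind, so the event is one of: (i) some $\bar x_r+\bar\epsilon$ reaches a breakpoint $A_k$; (ii) some $\bar x_r+\bar\epsilon$ reaches $\Clower$; (iii) a ramp constraint against an adjacent ``on'' period $s_1-1$ or $s_2+1$ becomes tight. In cases (i)--(ii) every run coordinate of $x(\bar\epsilon)$ is a breakpoint, resp. $\Clower$, plus an integer multiple of $V$, hence lies in $\mathcal{Q}_d$, and we have obtained an optimal solution with strictly fewer coordinates outside $\mathcal{Q}_d$. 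In case (iii) the run strictly enlarges; $x(\bar\epsilon)$ is still optimal, and either the newly absorbed coordinate already lies in $\mathcal{Q}_d$ --- in which case closure under $\pm V$ puts the whole run in $\mathcal{Q}_d$ and we are again done --- or it does not, in which case it is not a breakpoint, the enlarged run still avoids all breakpoints, and the argument repeats on the enlarged run. Each enlargement strictly increases the run length, which is at most $T$, so only finitely many enlargements precede an event of type (i) or (ii); after that, the number of coordinates outside $\mathcal{Q}_d$ has strictly dropped. Iterating over all offending coordinates yields, after finitely many steps, an optimal solution with $\bar x_t\in\mathcal{Q}_d$ for every $t$.

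The step I expect to be the main obstacle is the handling of case (iii): one must commit to shifting in the decreasing direction (this is what prevents a coordinate from being pinned at $\Vupper$, whose $V$‑coset need not be contained in $\mathcal{Q}_d$), must argue that the objective really is preserved through an entire chain of enlargements --- which rests on the invariant that every coordinate of each enlarged run stays outside $\mathcal{Q}_d$ and is therefore not a breakpoint --- and must check that the feasibility‑interval description transfers verbatim from the proof of Proposition \ref{prop:duc_ext_point}. Apart from that, the argument is a direct extension of the earlier one, with the breakpoint values $\{A_k+nV\}$ adjoined to the set of admissible generation levels.
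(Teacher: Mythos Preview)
Your argument follows the same line as the paper's: isolate the maximal $V$-ramp run through an offending index $t$, observe that no coordinate of the run can be a breakpoint or a bound (else $\bar x_t$ itself would lie in $\mathcal{Q}_d$), and shift the run uniformly. The paper simply asserts that one of the two shifts decreases the objective by $|\sum_r \mu_{k_r}^r|\epsilon$ and declares a contradiction; you instead note that optimality forces this slope to vanish and then push in the decreasing direction until an event, iterating until every coordinate lands in $\mathcal{Q}_d$. Your constructive version in fact closes a small gap the paper leaves open, namely the case $\sum_r \mu_{k_r}^r=0$, where no strict decrease is available and a bare contradiction does not go through.

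One technical point to tighten: the claim that $\mathcal{Q}_d\cap[\Clower,\Cupper]$ is closed under $\pm V$ is not quite correct. The cosets of $\Clower$, $\Cupper$, and each $A_k$ are indeed closed within $[\Clower,\Cupper]$, but $\Vupper - V$ can lie in $[\Clower,\Cupper]$ without belonging to $\mathcal{Q}_d$ (take $\Clower=1$, $V=1$, $\Vupper=2.5$, $\Cupper=5$). Where you first invoke closure this is harmless, since all you actually use is that no run coordinate equals $\Clower$, $\Cupper$, or some $A_k$, and that follows from closure of those three cosets alone. In case~(iii), however, your dichotomy ``absorbed coordinate in $\mathcal{Q}_d$ vs.\ not'' misfires if the absorbed coordinate happens to equal $\Vupper$: it lies in $\mathcal{Q}_d$, yet the rest of the run need not. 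The repair is simply to phrase the dichotomy as ``absorbed coordinate is a breakpoint or a bound'' vs.\ ``it is not''; in the first branch the relevant coset \emph{is} closed and the whole run lands in $\mathcal{Q}_d$, while in the second branch (which now also covers $\Vupper$) the enlarged run still contains no kink and the shifting continues. Your own parenthetical about the $\Vupper$-coset in the final paragraph shows you already sensed this was the delicate spot.
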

\begin{proof}
{We prove this claim by a contradiction method.}
Suppose that there exists some $t \in [1,T]_{\Z}$ such that $\bar{x}_t \notin \mathcal{Q}_d$ for the optimal solution $(\bar{x}, \bar{y}, \bar{u})$ of Problem \eqref{eqn:org_model_no_su_sd}, i.e., $\bar{x}_t \in (\Clower, \Cupper) \setminus \mathcal{Q}_d$, with the optimal value $\bar{z} = \sum_{t=1}^{T}  \big( \overline{U} \bar{u}_t + \underline{U}(\bar{y}_{t-1} - \bar{y}_t + \bar{u}_t) +  \bar{\varphi}_t \big)$. In the following we construct a feasible solution to obtain a better objective value. If $t \geq 2$ and $|\bar{x}_t - \bar{x}_{t-1}| = V$, we let $s_1 \leq t-1$ be the smallest index such that $|\bar{x}_{s_1+1} - \bar{x}_{s_1}| = |\bar{x}_{s_1+2} - \bar{x}_{s_1+1}| = \cdots = |\bar{x}_t - \bar{x}_{t-1}| = V$; otherwise, we let $s_1 = t$. If $t \leq T-1$ and $|\bar{x}_{t+1} - \bar{x}_{t}| = V$, we let $s_2 \geq t+1$ be the largest index such that $|\bar{x}_{t+1} - \bar{x}_{t}| = |\bar{x}_{t+2} - \bar{x}_{t+1}| = \cdots = |\bar{x}_{s_2} - \bar{x}_{s_2-1}| = V$; otherwise, we let $s_2 = t$. Following the proof in Proposition \ref{prop:duc_ext_point}, we can always construct two feasible points of $\mathcal{D}$, $(\bar{x}^1, \bar{y}, \bar{u})$ and $(\bar{x}^2, \bar{y}, \bar{u})$, such that $\bar{x}^1_r = \bar{x}_r + \epsilon$ for $r \in [s_1,s_2]_{\Z}$, $\bar{x}^2_r = \bar{x}_r - \epsilon$ for $r \in [s_1,s_2]_{\Z}$, and $\bar{x}^1_r = \bar{x}^2_r = \bar{x}_r$ for $r \notin [s_1,s_2]_{\Z}$, where $\epsilon$ is an arbitrarily small positive number.

Now we show one of $(\bar{x}^1, \bar{y}, \bar{u})$ and $(\bar{x}^2, \bar{y}, \bar{u})$ produces a better objective value and is also feasible for constraints \eqref{eqn:duc_piecewise_cons}. It is easy to observe that for each time period $s$, if $\bar{x}_s > 0$, then at most two adjacent pieces of linear functions of constraints \eqref{eqn:duc_piecewise_cons} are tight; otherwise, it leads to $\bar{x}_s = 0$. Since $\bar{x}_t \notin \mathcal{Q}_d$, there is at most one piece of linear function of constraints \eqref{eqn:duc_piecewise_cons}, e.g., piece $k_r$, is tight for each time period $r \in [s_1, s_2]_{\Z}$, as two adjacent pieces (e.g., pieces $k$ and $k+1$) intersect at $A_k$, which belongs to $\mathcal{Q}_d$. Note that if there exist two adjacent pieces (e.g., pieces $k$ and $k+1$) intersecting at $A_k$ for some time period $r \in [s_1, s_2]_{\Z}$, then $\bar{x}_{r} = A_k$ and it follows that $\bar{x}_t = A_k+sV$ for some $s \in [-|t-r|, |t-r|]_{\Z}$ by definition, which contradicts the fact that $\bar{x}_t \notin \mathcal{Q}_d$ and $s \leq T$.
Therefore, we can increase or decrease $\bar{x}_r$ by $\epsilon$ for $r \in [s_1, s_2]_{\Z}$ to decrease the optimal value by at least $| \sum_{r=s_1}^{s_2} \mu_{k_r}^r | \epsilon$ and the resulting solution, $(\bar{x}^1, \bar{y}, \bar{u})$ or $(\bar{x}^2, \bar{y}, \bar{u})$, is feasible for both $\mathcal{D}$ and constraints \eqref{eqn:duc_piecewise_cons}, which is a contradiction.
\end{proof}

In other words, in order to find an optimal solution to the deterministic \rred{single-}UC problem \eqref{model:Nduc} with piecewise linear objective function (i.e., \eqref{eqn:org_model_no_su_sd}), we only need to consider the feasible solutions $(x,y,u)$ where $x_t \in \mathcal{Q}_d$ for all $t \in [1,T]_{\Z}$.

\subsection{An $\Oe(T)$ Time Dynamic Programming Algorithm} \label{subsec:duc-ot-dp}
As $ \mathcal{Q}_d $ is a finite set and the cardinality of $\mathcal{Q}_d$, {in the order of $\mathcal{O} ( (K+2) \lceil (\Cupper-\Clower)/V \rceil )$} and denoted as {$|\mathcal{Q}_d|$}, does not depend on the total time period $T$, we can explore a backward induction dynamic programming framework by defining the corresponding states and decisions for each time period $ t $. We first define the state space for the dynamic programming algorithm and then derive the corresponding Bellman equations based on the described state space.

\subsubsection*{The State Space}
The state space in general can be described as $(x,y,u,d)$ with $(x,y,u)$ as defined in Section \ref{Ndeteruc} and $d$ representing the duration of the generator at the current status (online or offline). For the convenience of later on analysis of extended formulations, we denote the state space as $ \mathcal{S} = \mathcal{S}_* \cup \mathcal{S}_0 \cup \mathcal{S}_1 $ with details provided as follows.
\begin{itemize}[nolistsep, leftmargin=*]
\item The set $ \mathcal{S}_* $, which is a singleton, includes a dummy source state as the initial status of the generator before time period $1$.
\item The set $\mathcal{S}_0$ includes all the states when the generator is offline, i.e., $ \mathcal{S}_0 = \{ (x, y, u, d) \in \mathcal{Q}_d \times \B \times \B \times [1, \ell]_{\Z}: x=0, y=0, u=0 \}$. Note here that we use $\ell$ as the upper bound for $d$ because the decision making remains the same whenever the duration $d \geq \ell $.
\item The set $\mathcal{S}_1$ includes all the states when the generator is online, i.e., $\mathcal{S}_1 = \{ (x, y, u,d) \in \mathcal{Q}_d \times \B \times \B \times [1, L]_{\Z}: y=1, u=1 \mbox{ when } d=1; \ y=1, u=0 \mbox{ when } d>1 \} $. Note here that we let $ d = 1 $ if the generator just starts up, i.e., $y=1$ and $u=1$. It is obvious that $y=1$ and $u=0 $ when $d > 1$. Similarly, it is sufficient to have $ d \leq L $.
\end{itemize}
Based on the {above} construction, we can observe that $ \mathcal{S}$ is a finite set and the cardinality of $\mathcal{S}$, i.e., $|\mathcal{S}|${,} {in the order of $\mathcal{O}(\ell + L (K+2) \lceil (\Cupper-\Clower)/V \rceil )$}, does not depend on the total time period $T$.

\subsubsection*{The State Transition Graph}
Now, we construct a directed graph, \rred{as shown in the following Figure \ref{fig:direct-graph-2},} to show all possible state transitions before deriving the Bellman equations. Essentially each state transition in the graph represents a decision from one state to another (i.e., correspondingly from time $t$ to time $t+1$). In particular, based on the state space $\mathcal{S}$, we add directed arcs as follows to link different states, representing all possible state transitions.
\begin{itemize}[nolistsep, leftmargin=*]
\item First, we add directed arcs from the source state in $\mathcal{S}_*$ to state $(0, 0, 0, \ell)$ from $\mathcal{S}_0$ and each state $(x,1,0,L) \in \mathcal{S}_1$ with $x \in \mathcal{Q}_d$. Note {here} that these to-go states of the source state form the set of decision candidates at time $1$.
\item Second, we add possible directed arcs between any two states in $\mathcal{S}_0 \cup \mathcal{S}_1$ as long as the state transition {satisfies} the minimum-up/-down time, ramp rate, and capacity restrictions. In particular, based on the states indicating the generator is offline, we add arcs from $ (0,0,0,s) $ to $ (0,0,0,s+1) $ for all $s \in [1,\ell - 1]_{\Z}$, from $ (0,0,0,\ell) $ to both itself and each state $(x,1,1,1) \in \mathcal{S}_1$ with $x \in \mathcal{Q}_d {\cap [0, \Vupper] }$. Note here that self-loop arc is allowed since it is enough to take $ d $ as $ \ell $ if the offline status $ d $ lasts longer than the minimum-down time limit $\ell$. In addition, for the state indicating the generator is online, we have the following three possible cases.
	\begin{itemize}[nolistsep]
	\item For each state $(x,y,u,d) \in  \mathcal{S}_1$ with $d<L$, we add an arc from it to each state $ (\bar{x},y,\bar{u},d+1)  \in \mathcal{S}_1$ such that $|x - \bar{x}| \leq V$.
	\item For each state $ (x,y,u,L) \in  \mathcal{S}_1$, we add an arc from it to each state $ (\bar{x},y,u,L )  \in \mathcal{S}_1$ such that $  |x - \bar{x}| \leq V $. Self-loop {arc} is also allowed here.
	\item For each state $ (x,y,u,L) \in  \mathcal{S}_1$ with $ x  \leq \Vupper $, i.e., the shut-down ramp constraint is satisfied, we add an arc from it to $ (0,0,0,1) $.
	\end{itemize}
\end{itemize}
Furthermore, we label all the states in $\mathcal{S}$ with positive integers starting from the source state with index $0$. Meanwhile, we denote all the immediate successors of state $ i $ as $ S(i) $ and all the {immediate} predecessors of state $ i $ as $ P(i) $. Also, we denote the values of $(x,y,u)$ corresponding to state $ i $ as $(x(i), y(i), u(i))$.


\begin{figure}[htb]
	\centering
	\begin{tikzpicture}[ scale=0.56, every node/.style={scale=0.7}] 
	
	\tikzstyle{state} = [ draw, fill=white, circle, text width = 4em, 
	text badly centered, node distance=8em, inner sep=1pt, font=\sffamily]
	\tikzstyle{stateEdgePortion} = [black];
	\tikzstyle{stateEdge} = [stateEdgePortion,-{>[scale=2.5,
          length=2,
          width=2]}];
	\tikzstyle{edgeLabel} = [pos=0.5, text centered, font={\sffamily}];

\node[state, name=10] at (0,0) {\small $(x,1,1,1)$}; 
\node[state, name=20] at (7,0)  {\small $(x,y,u,L)$};
\node[state, name=30] at (14,0)  {\small $(\bar{x},y,u,L)$};
\node[state, name=40] at (21,0) {\small $(x,1,0,L)$};

\node[state, name=1-7] at (0,-7) {\small $(x,1,1,1)$};
\node[state, name=2-7] at (7,-7) {\small $(x,y,u,d)$};
\node[state, name=3-7] at (14,-7) {\small $(\bar{x},y,\bar{u},d+1)$};
\node[state, name=4-7] at (21,-7) {\small $(x,1,0,L)$};

\node[state, name=4-12] at (21,-12) {\small $(0,0,0,1)$ };
\node[state, name=3-12] at (14,-12) {\small $(0,0,0,s)$};
\node[state, name=2-12] at (7,-12) {\small $(0,0,0,s+1)$};
\node[state, name=1-12] at (0,-12) {\small $(0,0,0,\ell)$};

\node[state, name=0-12] at (24.5,-14) {$0$};
\draw (24.5,-15.5) node{ \mbox{Source state}};

	\draw (0-12) edge[stateEdge,  bend left=15] (1-12);
	\draw (0-12) edge[stateEdge,  bend left=-25] (4-7);
	\draw (0-12) edge[stateEdge,  bend left=-30, dashdotted ] (22,-3.5);
	\draw (0-12) edge[stateEdge,  bend left=-35] (40);
	
	\draw (3-12) edge[stateEdge ]  (2-12);
	\draw (4-12) edge[stateEdge, dashdotted ]  (18.5,-12);
	\draw (17.5,-12) node{ \Large {$\cdots$}  };
	\draw (16.5,-12) edge[stateEdge, dashdotted ]  (3-12);
	\draw (2-12) edge[stateEdge, dashdotted ]  (4.5,-12);
	\draw (3.5,-12) node{ \Large {$\cdots$}  };
	\draw (2.5,-12) edge[stateEdge, dashdotted ]  (1-12);
	\draw (1-12) edge[stateEdge,  loop below ] (1-12);

	\draw (1-12) edge[stateEdge,  bend left=35] (1-7);
	\draw (.8,-8.7) node{$x \in \mathcal{Q}_d \cap [0, \Vupper] $  };
	\draw (1-12) edge[stateEdge,  bend left=45, dashdotted] (-1,-5);

	\draw (2-7) edge[stateEdge ] (3-7);
	\draw (10.5,-7.8) node{\large $|x - \bar{x}| \leq V$ };
	\draw (1-7) edge[stateEdge, dashdotted ]  (2.5,-7);
	\draw (3.5,-7) node{ \Large {$\cdots$}  };
	\draw (4.5,-7) edge[stateEdge, dashdotted ]  (2-7);
	\draw (3-7) edge[stateEdge, dashdotted ]  (16.5,-7);
	\draw (17.5,-7) node{ \Large {$\cdots$}  };
	\draw (18.5,-7) edge[stateEdge, dashdotted ]  (4-7);
	\draw (4-7) edge[stateEdge,  loop right ] (4-7);
	\draw (1-7) edge[stateEdge, dashdotted ]  (2.5,-6);
	\draw (4.5,-6) edge[stateEdge, dashdotted ]  (2-7);
	\draw (3-7) edge[stateEdge, dashdotted ]  (16.5,-6);
	\draw (18.5,-6) edge[stateEdge, dashdotted ]  (4-7);

	\draw (20) edge[stateEdge ] (30);
	\draw (10.5,-.8) node{\large $|x - \bar{x}| \leq V$ };
	\draw (10) edge[stateEdge, dashdotted ]  (2.5,0);
	\draw (3.5,0) node{ \Large {$\cdots$}  };
	\draw (4.5,0) edge[stateEdge, dashdotted ] (20);
	\draw (30) edge[stateEdge, dashdotted ]  (16.5,0);
	\draw (17.5,0) node{ \Large {$\cdots$}  };
	\draw (18.5,0) edge[stateEdge, dashdotted ]  (40);	
	\draw (40) edge[stateEdge,  loop right ] (40);
	\draw (10) edge[stateEdge, dashdotted ]  (2.5,-1);
	\draw (4.5,-1) edge[stateEdge, dashdotted ] (20);
	\draw (30) edge[stateEdge, dashdotted ]  (16.5,-1);
	\draw (18.5,-1) edge[stateEdge, dashdotted ]  (40);	

	\draw (40) edge[stateEdge, bend left = 10, dashdotted ]  (21.5,-2);
	\draw (20.5,-2) edge[stateEdge, bend left = 10, dashdotted ]  (40);
	\draw (21.5,-5) edge[stateEdge, bend left = 10, dashdotted ]  (4-7);
	\draw (4-7) edge[stateEdge, bend left = 10, dashdotted ]  (20.5,-5);	

	\draw (4-7) edge[stateEdge,  bend left=35] (4-12);
	\draw (21.5,-9.5) node{\large $x \leq \Vupper$ };

	\draw (0,-3.5) node{ \Large {$\cdots$}  };
	\draw (7,-3.5) node{ \Large {$\cdots$}  };
	\draw (14,-3.5) node{ \Large {$\cdots$}  };
	\draw (21,-3.5) node{ \Large {$\cdots$}  };
	\draw (3.5,-3.5) node{ \Large {$\cdots$}  };
	\draw (17.5,-3.5) node{ \Large {$\cdots$}  };

	\end{tikzpicture}
	\caption{The State Transition Graph}\label{fig:direct-graph-2}
\end{figure}


\subsubsection*{The Bellman Equations}
Now we are ready to establish the dynamic programming framework. We let $ F_t (i)$ represent the optimal value function for time period $ t $ considering that state $i$ happens at time $t-1$. Based on Proposition \ref{prop:duc_opt_sol}, an optimal decision for current time period lies in $ S(i) $. {Accordingly,} the Bellman equations can be formulated as follows:
\begin{subeqnarray} \label{duc_dp}
	&& F_t (i) = \min_{j\in S(i)} \bigg\{ \overline{U}u(j) + \underline{U}(y(i) - y(j) +u(j)) + f_t(x(j), y(j))  + F_{t+1}(j) \bigg\}, \nonumber \\
	&& \hspace{4in} \forall i \in  \mathcal{S}, \forall t \in [1, T-1]_{\Z}, \slabel{eqn:dp1}\\
	&& F_T (i) = \min_{j\in S(i)} \bigg\{ \overline{U}u(j) + \underline{U}(y(i) - y(j) +u(j)) + f_T(x(j), y(j))  \bigg\}, \ \forall i \in  \mathcal{S},	\slabel{eqn:dp3}
\end{subeqnarray}
where $ f_t(x(j), y(j)) $ describes the generation cost minus the revenue, $ \overline{U}u(j) $ represents the start-up cost, and $ \underline{U}(y(i) - y(j) +u(j)) $ represents the shut-down cost. For notation brevity, we let $ E_{tij} = \overline{U}u(j) + \underline{U}(y(i) - y(j) +u(j)) + f_t(x(j), y(j))  $ for $ t \in [1, T]_{\Z} $. {Accordingly,} the objective of our backward induction dynamic programming is to find out the value of {$ F_1(0) $}.

\begin{proposition} \label{prop:duc_order_T}
The deterministic \rred{single-}UC problem \eqref{eqn:org_model_no_su_sd} can be solved in $ \Oe(T) $ time, \rred{or more specifically, in $ \Oe ({|\mathcal{Q}_d|}({|\mathcal{Q}_d|}L+\ell) T) $} time.
\end{proposition}
\begin{proof}
In order to obtain the optimal objective value and optimal solution, we need to calculate $ F_t (i) $ for all possible $t$ and $i$ and record the optimal candidates for them. To calculate the value of each optimal value function in Bellman equations \eqref{duc_dp}, we search among the candidate {solutions} $ j\in S(i) $ for each $ i $ and this step takes $ \Oe({|\mathcal{Q}_d|}) $ time. Since there are in total $ {|\mathcal{Q}_d|}L+\ell $ number of states in the state graph, the computational time at each time period is $\Oe ({|\mathcal{Q}_d|}({|\mathcal{Q}_d|} L+\ell) )$. Thus, the total time to calculate the value of objective $  F_1(0) $ is $ \Oe ({|\mathcal{Q}_d|}({|\mathcal{Q}_d|}L+\ell) T) $. The optimal solution for the problem can be obtained accordingly. 
\end{proof}


\rred{Note here that from Proposition 5, we can observe that for a given $T$, when a generator has a very small ramping rate but a large generation capacity difference between the lower and upper bounds (e.g., $V << \overline{C} - \underline{C}$ and thus large $|\mathcal{Q}_d|$), the corresponding single-UC problem will be relatively more difficult to solve.}

When start-up profile is considered, we have the following observations.
\begin{remark} \label{rmk:duc-su}
	If the start-up profile (i.e., time-dependent start-up cost) is taken into account in the deterministic \rred{single-}UC model \eqref{eqn:org_model_no_su_sd}, then we need to extend the upper bound of the offline duration variable $ d $ from $\ell$ to {the cold-start point of the generator}, which as a result {keep{s}} the computational complexity as $\Oe(T)$ in general.
\end{remark}

\subsection{An Extended Formulation for Deterministic \rred{Single-}UC with Piecewise Linear Cost Function} \label{exuc}
Based on Bellman equations \eqref{duc_dp} {derived} for the deterministic \rred{single-}UC model, we reformulate the problem as a linear program {extended formulation}. The incentive for the reformulation is to develop a linear program that can provide integral solutions to the deterministic \rred{single-}UC problem in a higher dimensional space. The primal form of our linear program can be formulated as follows:
\begin{subeqnarray} \label{model:duc_lp_bell}
	&\max  & F_1(0) \\
	&\mbox{s.t.}  & F_t(i) \leq E_{tij} + F_{t+1}(j), \ \forall i \in \mathcal{S}, j \in S(i), \forall t \in [1, T-1]_{\Z},  \\
	&& F_T(i) \leq E_{Tij}, \ \forall i \in \mathcal{S}, j \in S(i),
\end{subeqnarray}
where the parameters $E_{tij} =  \overline{U}u(j) + \underline{U}(y(i) - y(j) +u(j)) + f_t(x(j), y(j)) $ for all possible $(t, i, j)$ are defined under equations \eqref{duc_dp}. 

The formulation \eqref{model:duc_lp_bell} cannot provide {an optimal solution} to the deterministic \rred{single-}UC problem directly. Although we can solve {formulation \eqref{model:duc_lp_bell}} and search for tight constraints to find solutions, this takes us back to the dynamic programming framework. Thus, we resort to its dual formulation and then provide an extended linear formulation of the original problem, which we show can provide {an optimal} solution to the deterministic \rred{single-}UC problem directly.
The dual formulation for \eqref{model:duc_lp_bell} {can be described} as follows:
\begin{subeqnarray}  \label{model:duc_d}
	&\min  & \sum_{t \in [1, T]_{\Z}, i \in \mathcal{S}, j \in S(i)} E_{tij}w_{tij} \slabel{eqn:dual0}\\
	&\mbox{s.t.}  & \sum_{j \in S(1)} w_{11j} = 1, \slabel{eqn:dual1}\\
	&& \sum_{j \in S(i)} w_{1ij} = 0, \ \forall i \in \mathcal{S} \setminus \mathcal{S}_*,\slabel{eqn:dual2} \\	
	&& \sum_{j \in S(i)} w_{tij} - \sum_{k \in P(i)} w_{t-1,ki} = 0, \ \forall  i \in \mathcal{S}, \forall t \in [2, T]_{\Z}, \slabel{eqn:dual3} \\	
	&& w_{tij} \geq 0, \ \forall i \in \mathcal{S}, j \in S(i), \forall  t \in [1, T]_{\Z}, \slabel{eqn:dual4}
\end{subeqnarray}
where $w_{tij} $ for each possible $(t, i, j)$ is {the} dual variable corresponding to each constraint in \eqref{model:duc_lp_bell}.

In the following lemma, we demonstrate that the above dual program \eqref{model:duc_d} can automatically generate integral solutions with respect to $ w $. Furthermore, we explore physical {meanings} of these dual variables.

\begin{lemma} \label{lemma:binary solution}
	The extreme points of the polytope \eqref{eqn:dual1} -- \eqref{eqn:dual4} are binary.
\end{lemma}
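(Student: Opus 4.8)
The plan is to recognize the constraint system \eqref{eqn:dual1}--\eqref{eqn:dual4} as the flow-conservation polytope of a directed graph and invoke total unimodularity. First I would reinterpret the variables: think of $w_{tij}$ as the amount of flow sent along a copy of the arc $(i,j)$ placed in ``layer'' $t$ of a time-expanded network. Constraint \eqref{eqn:dual1} says one unit of flow leaves the source at layer $1$; constraint \eqref{eqn:dual2} says no flow leaves any non-source node at layer $1$; constraint \eqref{eqn:dual3} is exactly conservation at node $i$ between consecutive layers (inflow from layer $t-1$ equals outflow at layer $t$); and \eqref{eqn:dual4} is nonnegativity. Hence the feasible region is precisely the set of unit $s$--$t$ flows (here the ``sink'' is implicit: flow simply terminates after layer $T$) in the time-expanded DAG whose node set is $\mathcal{S}\times\{1,\dots,T\}$ together with the single source node.

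Next I would write the system in matrix form $M\,w = b$, $w\ge 0$, where $M$ is the node-arc incidence-type matrix of this time-expanded network and $b$ is a $0$/$1$ vector (a single $1$ in the row for \eqref{eqn:dual1}, zeros elsewhere). The key step is to check that $M$ is totally unimodular. Because every column of $M$ (one per variable $w_{tij}$) has at most one $+1$ and at most one $-1$ — a $+1$ in the outflow-conservation row for node $i$ at layer $t$ and a $-1$ in the inflow row for node $j$ at layer $t+1$ (with the layer-$1$ rows \eqref{eqn:dual1}--\eqref{eqn:dual2} playing the role of source rows, which again contribute a single entry per relevant column) — $M$ is an interval/incidence matrix of the standard network-flow type, and such matrices are totally unimodular by the classical characterization (e.g.\ the one in \cite{nemhauser2013ip}). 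With $M$ totally unimodular and $b$ integral, every vertex of $\{w: Mw=b,\ w\ge 0\}$ is integral; combined with the obvious bounds $0\le w_{tij}\le 1$ forced by \eqref{eqn:dual1} and \eqref{eqn:dual3} propagating the unit of flow, every extreme point is $0$/$1$, which is the claim.

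An alternative, more self-contained route (useful if one prefers not to cite TU directly) is a direct flow-decomposition argument: take any feasible $w$, peel off a source-to-``end'' directed path carrying the minimum arc value $\delta>0$ along it, subtract $\delta$ from those arcs; this preserves all conservation equations and reduces the total weight by $\delta$, so after finitely many steps $w$ is written as a convex-like combination of incidence vectors of unit paths, and those path-incidence vectors are exactly the $0$/$1$ feasible points; any non-$0$/$1$ $w$ is therefore a strict convex combination of others and hence not extreme. Either way, the main obstacle is purely bookkeeping: one must be careful that the layer-$1$ equations \eqref{eqn:dual1}--\eqref{eqn:dual2} together correctly encode ``exactly one unit emanates from the source and nothing emanates from other nodes at the first layer,'' and that the graph constructed in Figure~\ref{fig:direct graph} (with its self-loops at the duration-$\ell$ offline state and duration-$L$ online states) still yields a matrix of the network-flow form — self-loops contribute a column that is identically zero in $M$ after the layer shift is accounted for, or more precisely a column with a $+1$ and $-1$ in the same node's consecutive layers, which does not break total unimodularity. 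After this structural identification, the integrality of extreme points is immediate, and the subsequent discussion of the physical meaning of $w_{tij}$ (a unit path in the time-expanded graph corresponds exactly to one feasible commitment schedule) follows naturally.
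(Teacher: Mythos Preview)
Your argument is correct and is, in fact, the textbook way to handle this polytope: each column of the equality system has exactly one $+1$ (in the row \eqref{eqn:dual1}, \eqref{eqn:dual2}, or \eqref{eqn:dual3} indexed by $(t,i)$) and at most one $-1$ (in the row \eqref{eqn:dual3} indexed by $(t+1,j)$, absent only when $t=T$), so the coefficient matrix is a network matrix and hence totally unimodular; with the integral right-hand side $(1,0,\dots,0)$ all vertices are integral, and the unit-flow bound forces them into $\{0,1\}$. Your remark about the self-loops is also right: in the time-expanded graph a self-loop at state $i$ becomes an ordinary arc from layer $t$ to layer $t+1$, so the $+1$ and $-1$ land in distinct rows and TU is unaffected.

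The paper, however, proves the lemma by a different route. Rather than appealing to total unimodularity, it shows that for \emph{every} linear objective $E$ the LP \eqref{model:duc_d} admits a binary optimum: it solves the DP \eqref{duc_dp}, reads off the optimal path $(i_0,i_1,\dots,i_T)$ in the state graph, sets $\hat w_{t,i_{t-1},i_t}=1$ along that path and $0$ elsewhere, checks feasibility of $\hat w$ against \eqref{eqn:dual1}--\eqref{eqn:dual4} directly, and then verifies optimality by computing $H(\hat w)=\sum_t E_{t,i_{t-1},i_t}=F_1(1)$, the DP (hence primal-LP, hence dual-LP) optimum. Since every objective has a binary optimizer, every extreme point is binary. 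Your TU argument is shorter and more structural; the paper's argument is longer but more constructive, and the explicit $\hat w$ it builds is exactly the object used immediately afterwards in Proposition~\ref{thm: opt solution} to give $w_{tij}$ its physical meaning as the indicator of the transition taken at time $t$. Either proof is fine for the lemma itself; the paper's choice dovetails with the surrounding narrative.
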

\begin{proof}
To prove the lemma, it is equivalent to prove that $\forall \ E_{tij} \in {\R} $, there exists at least one optimal solution to the dual program \eqref{model:duc_d} that is binary.

First of all, by solving the deterministic \rred{single-}UC problem with dynamic program approach with respect to a given $ E $, we can obtain an optimal decision of the online/offline status and generation amount of the generator at each time period, i.e., $(\bar{x}_t, \bar{y}_t, \bar{u}_t)$ for all $t \in [1,T]_{\Z}$. We then construct $ \hat{w} $, which is binary, to represent this optimal decision.
Based on this optimal decision $ (\bar{x}, \bar{y}, \bar{u}) \in \mathcal{Q}_d^{T} \times \B^{2T}$, we can accordingly draw a directed path in our constructed {state transition} graph in Subsection \ref{subsec:duc-ot-dp}
 starting from state $ 1 $ and covering $T$ time periods. If in step $ t $ (i.e., at time period $t$) of the path the state of the generator goes from state $ i $ to state $ j $, we let $  \hat{w}_{tij} = 1 $. Otherwise, $  \hat{w}_{tij} = 0 $. In the following, we prove that this constructed solution $ \hat{w} $ is an optimal solution to the dual program \eqref{model:duc_d}, which is sufficient to prove our claim.

{To verify the feasibility, we plug $ \hat{w} $ into constraints \eqref{eqn:dual1} -- \eqref{eqn:dual4} to show {that} each constraint holds. For constraint \eqref{eqn:dual1}, since at time period $1$ the state of the generator goes from state $ 1 $ to only one of the states in set $ S(1) $, we have $  \hat{w}_{11 \hat{j}} = 1 $ for some $\hat{j} \in S(1)$ and $  \hat{w}_{11j} = 0 $ for all $j \in  S(1)$ such that $j \neq \hat{j}$. Constraint \eqref{eqn:dual2} is satisfied since at time period $1$ the generator has to start from dummy state $1 \in \mathcal{S}_*$ and change its state to some other one in $\mathcal{S} \setminus \mathcal{S}_*$, i.e., $  \hat{w}_{1ij} = 0, \ \forall i \in \mathcal{S} \setminus \mathcal{S}_* $. 
For each time $t \in [2,T]_{\Z}$ and state $i \in \mathcal{S}$ in constraints \eqref{eqn:dual3}, if the state of the generators changes from state $i$ to some state $j \in S(i)$, i.e., $\sum_{j \in S(i)} w_{tij} = 1$, then we have the state of the generator must change from some state $k \in P(i)$ to state $i$ at time $t-1$, i.e., $\sum_{k \in P(i)} w_{t-1,ki} = 1$; otherwise, we have  $\sum_{j \in S(i)} w_{tij} =\sum_{k \in P(i)} w_{t-1,ki} = 0$. For both cases, we have constraints \eqref{eqn:dual3} hold.
Constraint \eqref{eqn:dual4} is satisfied due to the construction of $\hat{w}$. Thus, $ \hat{w}$ is feasible for the dual program.}

To verify the optimality, we denote the optimal objective value obtained by dynamic program as $ F^* $ and the objective value of dual program \eqref{model:duc_d} with respect to $ \hat{w} $ as $ H(\hat{w}) $. We need to show that $ H(\hat{w}) = F^* $. Recalling the definition of $ \hat{w} $, we use a path, denoted as $ (i_0,i_1, i_2, \ldots, i_T) $, to determine $ \hat{w} $. Thus, we have
	\begin{eqnarray} 
		& H(\hat{w}) & = \sum_{t \in [1, T]_{\Z}, i \in \mathcal{S}, j \in S(i)} E_{tij}\hat{w}_{tij} = \sum_{t =1}^T E_{ti_{t-1}i_t}\hat{w}_{ti_{t-1}i_t} = \sum_{t =1}^T E_{ti_{t-1}i_t}, \mbox{ and } 
	\end{eqnarray}
	\begin{eqnarray} 
		& F^* = F_1 (1) & = \min_{j\in S(1)} \Big\{ E_{11j} + F_2(j) \Big\} = E_{1i_0 i_1} +F_2 (i_1) \nonumber \\
		&& = E_{1i_0 i_1} + \min_{j\in S(i_1)} \Big\{ E_{2i_1 j} + F_3(j) \Big\} = E_{1i_0 i_1} + E_{2i_1 i_2} + F_3(i_2) \nonumber \\
		&& = \cdots  = \sum_{t =1}^T E_{ti_{t-1}i_t} =  H(\hat{w}). 
	\end{eqnarray}
	Therefore, we claim that $ \hat{w} $ is the optimal solution to the dual program.
	
	From the above analysis, we notice that  $ \hat{w} $ is binary and optimal for the dual program {for any possible} cost coefficient $ E $. Thus, we have proved our claim.
\end{proof}

From Lemma \ref{lemma:binary solution}, we can further observe that this formulation itself has specific physical meanings. In particular, we notice that the optimal solution $ w^*_{tij} $ in the dual program represents {whether} at time period $ t $ the optimal decision corresponds to a state change from state $i$ (at time period $t-1$) to state $j$ (at time period $t$) or not. If yes, then $ w^*_{tij} = 1 $; otherwise, $ w^*_{tij} = 0 $. That is, the lemma provides an insight to formulate the deterministic \rred{single-}UC problem in a different way by linking the decision variables in the original space and the decision variables in the dual formulation \eqref{model:duc_d}. In the following, we present the detailed extended formulation for the deterministic \rred{single-}UC problem and justify its correctness.

\begin{proposition} \label{thm: opt solution}
	If $ w^* $ is an optimal solution to {the} dual program \eqref{model:duc_d}, then
	\begin{eqnarray} \label{eqn:opt solution}
		&& x_t^* = \sum_{i \in \mathcal{S}, j \in S(i)} x(j) w_{tij}^*, y_t^* = \sum_{i \in \mathcal{S}, j \in S(i)} y(j) w_{tij}^*, u_t^* = \sum_{i \in \mathcal{S}, j \in S(i)} u(j) w_{tij}^*, \ \forall t \in  [1, T]_{\Z}, \slabel{eqn:opt solution1} 
	\end{eqnarray}
	is an optimal solution to the deterministic \rred{single-}UC problem \eqref{eqn:org_model_no_su_sd}.
\end{proposition}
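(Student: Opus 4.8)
The plan is to mirror the proof of the analogous Proposition~\ref{thm: opt solution_new}: first exhibit a binary optimal dual solution, read it off as a single path in the state graph of Figure~\ref{fig:direct graph}, then check feasibility of the decoded trajectory for~\eqref{eqn:org_model_no_su_sd}, and finally match its objective value to the optimal value of~\eqref{eqn:org_model_no_su_sd} via strong LP duality and the correctness of the Bellman recursion~\eqref{duc_dp}. For the first step, by Lemma~\ref{lemma:binary solution} the polytope~\eqref{eqn:dual1}--\eqref{eqn:dual4} has a binary optimal solution, so (as in the treatment around Corollary~\ref{cor:binary solution_new}) it suffices to argue the statement for a binary $w^{*}$. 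Constraints~\eqref{eqn:dual1}--\eqref{eqn:dual3} describe a unit flow leaving the source at stage $1$ and conserved at each node in every later stage, so a binary $w^{*}$ activates exactly one arc per stage and these chain into a single source-to-sink path $(i_{0},i_{1},\dots,i_{T})$ with $i_{0}$ the source; under this $w^{*}$, formula~\eqref{eqn:opt solution1} collapses to $x_{t}^{*}=(i_{t})_{x}$, $y_{t}^{*}=(i_{t})_{y}$, $u_{t}^{*}=(i_{t})_{u}$, i.e., $(x^{*},y^{*},u^{*})$ is simply the sequence of node values along the path.

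Next I would show $(x^{*},y^{*},u^{*})\in\mathcal{D}$, i.e., that it satisfies~\eqref{eqn:Np-minup}--\eqref{eqn:Np-ramp-down} together with the boundary conditions $x_{0}^{*}=y_{0}^{*}=0$ in~\eqref{eqn:Np-nonnegativity}, by a case analysis over the arc types used to build the graph, each of which was introduced precisely to enforce one family of constraints: the split of the state space into $\mathcal{S}_{0}$ and $\mathcal{S}_{1}$, the bookkeeping of the duration component $d$, and the rule that an arc into $(0,0,0,1)$ may leave only a state $(x,1,u,L)$ with $x\le\Vupper$ enforce the minimum-up and shut-down-ramp limits; the chain on $\mathcal{S}_{0}$ up to $d=\ell$ (with a self-loop at $d=\ell$) enforces the minimum-down limit; membership $x\in\mathcal{Q}_{d}\subseteq\{0\}\cup[\Clower,\Cupper]$ together with the coupled value of $y$ enforces the generation bounds; and the conditions $|i_{x}-j_{x}|\le V$ inside $\mathcal{S}_{1}$ and $j_{x}\le\Vupper$ on start-up arcs enforce the ramp-up/-down limits. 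Since $i_{0}$ is offline, the trajectory also meets $x_{0}^{*}=y_{0}^{*}=0$. The hard part will be exactly this arc-by-arc bookkeeping: it is routine, but it has to be carried out carefully enough to certify that the graph of Figure~\ref{fig:direct graph} encodes $\mathcal{D}$ faithfully, adding no spurious transitions and omitting no feasible ones.

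Finally, for optimality: since $w^{*}$ is the indicator of the path, the dual objective~\eqref{eqn:dual0} equals $\sum_{t=1}^{T}E_{t\,i_{t-1}i_{t}}=\sum_{t=1}^{T}\bigl(\overline{U}(i_{t})_{u}+\underline{U}((i_{t-1})_{y}-(i_{t})_{y}+(i_{t})_{u})+f_{t}((i_{t})_{x},(i_{t})_{y})\bigr)$, which after substituting $x^{*},y^{*},u^{*}$ and $y_{0}^{*}=0$ is exactly the objective of~\eqref{eqn:org_model_no_su_sd} at $(x^{*},y^{*},u^{*})$; on the other hand, strong LP duality gives that the optimal value of~\eqref{model:duc_d} equals the optimal value $F_{1}(1)$ of the primal~\eqref{model:duc_lp_bell}, and the Bellman recursion~\eqref{duc_dp} together with Proposition~\ref{prop:duc_opt_sol} (which guarantees that some optimal trajectory of~\eqref{eqn:org_model_no_su_sd} uses only generation levels in $\mathcal{Q}_{d}$ and is therefore realized by a path in the state graph) shows $F_{1}(1)$ coincides with the optimal value of~\eqref{eqn:org_model_no_su_sd}. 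Combining these, $(x^{*},y^{*},u^{*})$ is feasible for~\eqref{eqn:org_model_no_su_sd} and attains its optimal value, hence is optimal, which proves the proposition.
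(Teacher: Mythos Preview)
Your proposal is correct and follows essentially the same route as the paper: reduce to a binary optimal $w^{*}$ via Lemma~\ref{lemma:binary solution}, read it off as a $T$-step path $(i_{0},\dots,i_{T})$ in the state graph so that $(x_{t}^{*},y_{t}^{*},u_{t}^{*})=((i_{t})_{x},(i_{t})_{y},(i_{t})_{u})$, argue feasibility from the construction of the arcs, and match the UC objective at $(x^{*},y^{*},u^{*})$ with the dual objective $\sum_{t}E_{t\,i_{t-1}i_{t}}w^{*}_{t\,i_{t-1}i_{t}}$. Your version is in fact more explicit than the paper's on two points the paper leaves implicit: the arc-by-arc verification that the graph encodes $\mathcal{D}$, and the closing chain (strong LP duality to identify the dual optimum with $F_{1}(1)$, then Proposition~\ref{prop:duc_opt_sol} to certify that $F_{1}(1)$ equals the UC optimum).
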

\begin{proof}
{From the proof of Lemma \ref{lemma:binary solution}, we can observe that for a given $ t $, $ w_{tij}^* =1$ for exactly one possible pair $(i,j)$ with $i, j \in \mathcal{S}$ and $ w_{tij}^* = 0 $ for all other possible pairs. All the pairs $i, j \in \mathcal{S}$ such that $ w_{tij}^* =1$ for all $t \in [1,T]_{\Z}$ construct an optimal path with $T$ steps originating from state $1$. Meanwhile, this path indicates the optimal online/offline status and generation amount of the generator at each time period $t$, i.e., $(x_t^*, y_t^*, u_t^*)$. 
	If we denote the entire path as $ (i_0,i_1, i_2, \ldots, i_T) $, {then we have} that $ x_t^* = x(i_t),y_t^* = y(i_t)$, and $u_t^* = u(i_t)$ {for each $t \in [1,T]_{\Z}$.} Recalling the definition of the {state transition} graph we can conclude that any ``T-step'' path {(i.e., a path {covering} the whole time horizon)} starting from state $ 1 $ results in a feasible solution to the deterministic \rred{single-}UC problem. Thus, the solution $ (x^*,y^*,u^*) $ in \eqref{eqn:opt solution1} is feasible.} 
	
	If we {replace} $ (x^*,y^*,u^*) $ with {$ w_{tij}^* $} {as described in the expression} \eqref{eqn:opt solution} into the objective function of the deterministic \rred{single-}UC problem \eqref{eqn:org_model_no_su_sd}, we have
	\begin{eqnarray} 
		\sum_{t=1}^{T} \bigg( \overline{U}u_t^* + \underline{U}(y_{t-1}^* - y_t^* + u_t^*) + f(x_t^*,y_t^*) \bigg) =  \sum_{t \in [1, T]_{\Z}, i \in \mathcal{S}, j \in S(i)} E_{tij}w_{tij}^*. \nonumber
	\end{eqnarray}
	Hence,  $ (x^*,y^*,u^*) $ is the optimal solution to the deterministic \rred{single-}UC problem.
\end{proof}

Now we are ready to {present} the extended formulation for the deterministic {\rred{single-}UC} problem with piecewise linear cost function. We replace constraints \eqref{eqn:Np-minup} - \eqref{eqn:Np-ramp-down} plus  \eqref{eqn:Np-nonnegativity} with constraints \eqref{eqn:dual1} -- \eqref{eqn:dual4} and add equations \eqref{eqn:opt solution}  to represent the relation between original decisions and the dual decision variables.
\begin{theorem}\label{thm:extend}
	The extended formulation of the deterministic {\rred{single-}UC} problem  \eqref{eqn:org_model_no_su_sd} can be written as follows:
	\begin{eqnarray} 
		&\min  & \sum_{t=1}^{T} \bigg( \overline{U}u_t + \underline{U}(y_{t-1} - y_t + u_t) + \rblue{\varphi_t} \bigg) \nonumber \\
		&\mbox{s.t.}  & x_t = \sum_{i \in \mathcal{S}, j \in S(i)} x(j) w_{tij}, \ y_t = \sum_{i \in \mathcal{S}, j \in S(i)} y(j) w_{tij}, \ u_t = \sum_{i \in \mathcal{S}, j \in S(i)} u(j) w_{tij}, \ \forall t \in  [1, T]_{\Z}, \nonumber \\
		&& \rblue{\eqref{eqn:duc_piecewise_cons},}  \ 	\eqref{eqn:dual1}- \eqref{eqn:dual4}, 
	\end{eqnarray}
	and if $ (x^*,y^*,u^*,w^*) $ is an optimal solution to the extended formulation, then  $ (x^*,y^*,u^*) $ is an optimal solution to the deterministic \rred{single-}UC problem.
\end{theorem}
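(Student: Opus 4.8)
The plan is to obtain Theorem~\ref{thm:extend} as a direct consequence of Lemma~\ref{lemma:binary solution} and Proposition~\ref{thm: opt solution}, by regarding the extended formulation as the dual program~\eqref{model:duc_d} with the variables $(x,y,u)$ adjoined through the defining equations~\eqref{eqn:opt solution}. First I would record the structural observation that, for any $w$ satisfying the flow constraints~\eqref{eqn:dual1}--\eqref{eqn:dual4}, the equations~\eqref{eqn:opt solution} determine $(x,y,u)$ uniquely; hence the feasible set of the extended formulation is the graph of this affine map over the feasible set of~\eqref{model:duc_d}, and eliminating $(x,y,u)$ recovers precisely the polytope~\eqref{eqn:dual1}--\eqref{eqn:dual4}. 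Since in this section the cost $f_t(x_t,y_t)=\varphi_t$ is the piecewise-linear (hence epigraph-representable) approximation, the resulting extended problem is a linear program in $(x,y,u,w)$ together with the epigraph variables, so it attains its optimum at a vertex.

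Next I would use Lemma~\ref{lemma:binary solution}: one can take an optimal solution whose $w$-part is a binary extreme point of~\eqref{eqn:dual1}--\eqref{eqn:dual4}, i.e.\ the indicator of a single $T$-step path $(i_0,i_1,\dots,i_T)$ in the directed graph of Figure~\ref{fig:direct graph}. For such a $w^*$ exactly one $w^*_{tij}=1$ for each $t$, so substituting~\eqref{eqn:opt solution} collapses each $f_t\bigl(\sum_{ij}j_x w^*_{tij},\sum_{ij}j_y w^*_{tij}\bigr)$ to $f_t((i_t)_x,(i_t)_y)$ and the whole objective to $\sum_{t}E_{t i_{t-1} i_t}=\sum_{t,i,j}E_{tij}w^*_{tij}$, which is exactly the objective~\eqref{eqn:dual0} of~\eqref{model:duc_d}; hence the two problems share the same optimal value. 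Proposition~\ref{thm: opt solution} then applies verbatim to this $w^*$: the triple $(x^*,y^*,u^*)$ defined by~\eqref{eqn:opt solution} is the state trajectory of a path, so $y^*,u^*$ are binary and $(x^*,y^*,u^*)$ satisfies~\eqref{eqn:Np-minup}--\eqref{eqn:Np-ramp-down}, and its UC cost equals $\sum_{t,i,j}E_{tij}w^*_{tij}$, the common optimal value. Therefore $(x^*,y^*,u^*)$ is feasible for~\eqref{eqn:org_model_no_su_sd} and attains its optimum, which is the assertion of the theorem; the remaining bookkeeping is identical to the proofs of Proposition~\ref{thm: opt solution} and Theorem~\ref{thm:extend_new}.

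The step I expect to need the most care is the objective-equivalence argument, i.e.\ checking that passing from $\sum_{t,i,j}E_{tij}w_{tij}$ to the form $\sum_t\overline{U}u_t+\underline{U}(y_{t-1}-y_t+u_t)+f_t(x_t,y_t)$ via~\eqref{eqn:opt solution} neither changes the optimal value nor the set of relevant optimizers: this relies on evaluating $f_t$ along a solution at which $w$ is \emph{integral}, since for fractional $w$ one only has $f_t\bigl(\sum_{ij}j_x w_{tij},\sum_{ij}j_y w_{tij}\bigr)\le\sum_{ij}f_t(j_x,j_y)w_{tij}$, so a priori the reduced problem in $w$ could have value strictly below that of~\eqref{model:duc_d}. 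Lemma~\ref{lemma:binary solution} is precisely what closes this gap: it furnishes an integral optimizer $w^*$, at which the two forms of the objective coincide, after which everything else is routine.
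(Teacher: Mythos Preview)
Your proposal is correct and follows essentially the same route as the paper: substitute the linking equations~\eqref{eqn:opt solution} into the objective, then invoke Lemma~\ref{lemma:binary solution} for integrality of $w^*$ and Proposition~\ref{thm: opt solution} for optimality of the resulting $(x^*,y^*,u^*)$. Your treatment is in fact more careful than the paper's one-line proof, since you explicitly flag the Jensen-type inequality $f_t(\sum j_x w_{tij},\sum j_y w_{tij})\le\sum f_t(j_x,j_y)w_{tij}$ that must collapse to equality at the integral optimizer---a point the paper leaves implicit.
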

\begin{proof}
	The conclusion holds immediately by replacing $ (x, y, u) $ in the objective function with {$ w_{tij} $ as described in} the expression \eqref{eqn:opt solution}. {Then the above formulation converts to model \eqref{model:duc_d}. Thus the conclusion holds} following the proof described in Proposition \ref{thm: opt solution} and the conclusion described in Lemma \ref{lemma:binary solution}.
\end{proof}

\section{Stochastic Single-UC with Piecewise Linear Cost Function} \label{polysuc}
In this section, we extend our study to the multistage stochastic \rred{single-}UC setting to incorporate uncertainty. {Similar to Section \ref{deteruc}, we will explore the optimality condition of this problem, develop an efficient dynamic programming algorithm to solve the problem, and derive an extended formulation in a higher dimensional space that provides the integral solutions. To that end, we first provide the mathematical formulation of this problem as follows.}

With the consideration of renewable {energy} generation and/or price uncertainties, as well as dependency among different time periods, {the} scenario-tree based stochastic \rred{single-}UC is introduced in \cite{jiang2016cutting}.
 Under this setting, the uncertain problem parameters are assumed to follow a discrete-time stochastic process with finite probability space and a scenario tree $\mathcal{T} = (\mathcal{V}, \mathcal{E})$ is utilized to describe the resulting information structure, as shown in Figure~\ref{fig:tree}. Each node $m \in \Ve$ at time $t$ of the tree provides the state of the system that can be distinguished by information available up to {time} $t$ ({corresponding to a scenario realization from times $1$ to $t$}).
Accordingly, corresponding to each node $m \in \Ve$, we let $t(m)$ be its time period, $\Pe(m)$ be the set of nodes along the path from the root node (denoted as node $ 1 $) to node $m$, and {$p_m$ be the probability associated with the state represented by node $m$.} We also denote {$ \Ve_* $ as the set of root node, i.e., $ \Ve_* = \{1\}$.} In addition, each node $m$ in the scenario tree has a unique parent $m^-$ and could have multiple children, denoted as set $\C_*(m)$. {Meanwhile, we define} $ \C(m) = \C_*(m) \cup \{m\} $. {Moreover}, we let $m^-_0 = m$, $m^-_1 = m^-$, and $m^-_k$ be the unique parent node of $m^-_{k-1}$, for $k \geq 2$.
We let $\Ve(m)$ represent the set of all descendants of node $m$, including itself. Finally, we let $\He_r(m) = \{ k \in \Ve(m): 0 \leq t(k) - t(m) \leq r-1 \} $ be the set of nodes used to describe minimum-up and minimum-down time constraints {(e.g., in Figure~\ref{fig:tree}, $r = {t(j)} - {t(m)}$)}. The decisions corresponding to each node $m$ are assumed to be made after observing the realizations of the problem parameters along the path from the root node to this node $m$, but are nonanticipative with respect to future realizations.
\begin{figure}[!htbh]
\begin{center}
\begin{tikzpicture}[scale=0.5]
    \draw[line width=0.5pt] (-4, 0) -- (-2, 0);
    \draw[line width=0.5pt, dashed] (-2, 0) -- (0, 0);
    \draw[line width=0.5pt] (0, 0) -- (2, 0);
    \draw[line width=0.5pt] (2, 0) -- (4, 3);
    \draw[line width=0.5pt] (2, 0) -- (4, -3);
    \draw[line width=0.5pt] (4, 3) -- (6, 3);
    \draw[line width=0.5pt] (4, -3) -- (6, -3);
    \draw[line width=0.5pt, dashed] (6, 3) -- (8, 3);
    \draw[line width=0.5pt, dashed] (6, -3) -- (8, -3);
    \draw[line width=0.5pt] (8, 3) -- (10, 3);
    \draw[line width=0.5pt] (8, -3) -- (10, -3);

    \draw[line width=0.5pt] (10, 3) -- (12, 5);
    \draw[line width=0.5pt] (10, 3) -- (12, 1);
    \draw[line width=0.5pt] (10, -3) -- (12, -5);
    \draw[line width=0.5pt] (10, -3) -- (12, -1);

    \draw[line width=0.5pt] (12, 5)-- (14, 5);
    \draw[line width=0.5pt] (12, 1)-- (14, 1);
    \draw[line width=0.5pt] (12, -5)-- (14, -5);
    \draw[line width=0.5pt] (12, -1)-- (14, -1);

    \draw[line width=0.5pt, dashed] (14, 5)-- (16, 5);
    \draw[line width=0.5pt, dashed] (14, 1)-- (16, 1);
    \draw[line width=0.5pt, dashed] (14, -5)-- (16, -5);
    \draw[line width=0.5pt, dashed] (14, -1)-- (16, -1);

    \draw[line width=0.5pt] (16, 5)-- (18, 5);
    \draw[line width=0.5pt] (16, 1)-- (18, 1);
    \draw[line width=0.5pt] (16, -5)-- (18, -5);
    \draw[line width=0.5pt] (16, -1)-- (18, -1);

    \draw[line width=0.5pt] (18, 5)-- (20, 5);
    \draw[line width=0.5pt] (18, 1)-- (20, 1);
    \draw[line width=0.5pt] (18, -5)-- (20, -5);
    \draw[line width=0.5pt] (18, -1)-- (20, -1);

    \draw(-4, 0) circle (0.5cm);
    \fill[white] (-4, 0) circle (0.46cm);
    \draw (-4,-0.5) node[anchor=south] {$1$};

    \draw(-2, 0) circle (0.5cm);
    \fill[white] (-2, 0) circle (0.46cm);

    \draw(0, 0) circle (0.5cm);
    \fill[white] (0, 0) circle (0.46cm);

    \draw(2, 0) circle (0.5cm);
    \fill[white] (2, 0) circle (0.46cm);
    \draw (2,-0.5) node[anchor=south] {$m$};

    \draw(4, 3) circle (0.5cm);
    \fill[white] (4, 3) circle (0.46cm);

    \draw(6, 3) circle (0.5cm);
    \fill[white] (6, 3) circle (0.46cm);

    \draw(8, 3) circle (0.5cm);
    \fill[white] (8, 3) circle (0.46cm);

    \draw(10, 3) circle (0.5cm);
    \fill[white] (10, 3) circle (0.46cm);
    \draw (10,3.5) node[anchor=north] {$j$};

    \draw(4, -3) circle (0.5cm);
    \fill[white] (4, -3) circle (0.46cm);

    \draw(6, -3) circle (0.5cm);
    \fill[white] (6, -3) circle (0.46cm);

    \draw(8, -3) circle (0.5cm);
    \fill[white] (8, -3) circle (0.46cm);

    \draw(10, -3) circle (0.5cm);
    \fill[white] (10, -3) circle (0.46cm);
    \draw (10,-3.5) node[anchor=south] {$k$};

    \draw(12, 5) circle (0.5cm);
    \fill[white] (12, 5) circle (0.46cm);
    \draw(14, 5) circle (0.5cm);
    \fill[white] (14, 5) circle (0.46cm);
    \draw(16, 5) circle (0.5cm);
    \fill[white] (16, 5) circle (0.46cm);
    \draw(18, 5) circle (0.5cm);
    \fill[white] (18, 5) circle (0.46cm);
    \draw(20, 5) circle (0.5cm);
    \fill[white] (20, 5) circle (0.46cm);

    \draw(12, -1) circle (0.5cm);
    \fill[white] (12, -1) circle (0.46cm);
    \draw(14, -1) circle (0.5cm);
    \fill[white] (14, -1) circle (0.46cm);
    \draw(16, -1) circle (0.5cm);
    \fill[white] (16, -1) circle (0.46cm);
    \draw(18, -1) circle (0.5cm);
    \fill[white] (18, -1) circle (0.46cm);
    \draw(20, -1) circle (0.5cm);
    \fill[white] (20, -1) circle (0.46cm);

    \draw(12, 1) circle (0.5cm);
    \fill[white] (12, 1) circle (0.46cm);
    \draw(14, 1) circle (0.5cm);
    \fill[white] (14, 1) circle (0.46cm);
    \draw(16, 1) circle (0.5cm);
    \fill[white] (16, 1) circle (0.46cm);
    \draw(18, 1) circle (0.5cm);
    \fill[white] (18, 1) circle (0.46cm);
    \draw(20, 1) circle (0.5cm);
    \fill[white] (20, 1) circle (0.46cm);

    \draw(12, -5) circle (0.5cm);
    \fill[white] (12, -5) circle (0.46cm);
    \draw(14, -5) circle (0.5cm);
    \fill[white] (14, -5) circle (0.46cm);
    \draw(16, -5) circle (0.5cm);
    \fill[white] (16, -5) circle (0.46cm);
    \draw(18, -5) circle (0.5cm);
    \fill[white] (18, -5) circle (0.46cm);
    \draw(20, -5) circle (0.5cm);
    \fill[white] (20, -5) circle (0.46cm);

    \draw[line width=0.5pt, dashed] (9.3, -2) -- (11, -0.3) -- (21, -0.3) -- (21, -5.7) -- (11, -5.7) -- (9.3, -4) -- (9.3, -2);
    \draw (14.2, -3.8) node[anchor=south] {$\Ve(k)$};

    \draw[line width=0.5pt, dashed] (9.3, 2) -- (11, 0.3) -- (13, 0.3) -- (13, 5.7) -- (11, 5.7) -- (9.3, 4) -- (9.3, 2);
    \draw (12, 3.8) node[anchor=north] {$\C(j)$};

    \draw[line width=0.5pt, dashed] (1.2, -4) -- (8.7, -4) -- (8.7, 4) -- (1.2, 4) -- (1.2, -4);
    \draw (5, 4.3) node[anchor=south] {$\He_{r}(m)$};

    \draw (2, -7.5) node[anchor=north] {$\mbox{\small Time } {t(m)}$};
    \draw (10, -7.5) node[anchor=north] {$\mbox{\small Time } {t(j)}$};
    \draw (-4, -7.5) node[anchor=north] {$\mbox{\small Time } 1$};
    \draw (20, -7.5) node[anchor=north] {$\mbox{\small Time } T$};

\end{tikzpicture}
\caption{Multistage stochastic scenario tree} \label{fig:tree}
\end{center}
\end{figure}

For the multistage stochastic \rred{single-}UC problem, following the notation described above,
besides \rred{$\overline{U}$, $\underline U$, and $f(\cdot)$} defined as the deterministic case, for each node $m \in \mathcal{V}$, we let $(x_m, y_m, u_m)$ denote the generation amount, online/offline status, and start-up decision of the generator. Here we assume the generator has been offline for $ s_0 $ time periods ($ s_0 \geq \ell $) before node $1$. Accordingly, the formulation for this problem {can be described} as follows:
\begin{subeqnarray} \label{eqn:MSS}
&\min  & \sum_{m \in \Ve } p_m \bigg( \overline{U}u_m + \underline{U}(y_{m^-} - y_m + u_m) + f_m(x_m,y_m) \bigg) \slabel{eqn:objective} \\
&\mbox{s.t.}  & y_m - y_{m^-} \leq y_k, \ \ \forall m \in \Ve, \forall k \in \He_{L}(m), \slabel{eqn:min-up} \\
&  & y_{m^-} - y_m \leq 1- y_k, \ \ \forall
m \in \Ve, \forall k \in \He_{\ell}(m), \slabel{eqn:min-down} \\
&& y_{m} - y_{m^-} \leq u_{m}, \ \ \forall m \in \Ve, \slabel{eqn:u-def-1} \\
&& u_m \leq \min\{ y_m, \ 1 - y_{m^-} \}, \ \ \forall m \in \Ve, \slabel{eqn:u-def-2} \\
&& \underline{C} y_m \leq x_m \leq \overline{C} y_m, \ \ \forall m \in \Ve, \slabel{eqn:generator-bound} \\
&& x_{m} - x_{m^-} \leq V y_{m^-} + \overline{V} (1 - y_{m^-}), \ \ \forall m \in \Ve, \slabel{eqn:ramp-up} \\
&& x_{m^-} - x_{m} \leq V y_m + \overline{V} (1 - y_m), \ \ \forall m \in \Ve, \slabel{eqn:ramp-down} \\
&& y_m,u_m \in \{0, 1\}, \ \ \forall m \in \Ve,\ x_{1^-} = y_{1^-} = 0. \slabel{eqn:nonnegativity}
\end{subeqnarray}

In the above formulation, the objective is to minimize the expected total cost, which is equal to the total generation cost (i.e., start-up, shut-down, and fuel costs) minus the revenue, where 
{$f(\cdot)$ is quadratic as the deterministic \rred{single-}UC case and can be approximated by a piecewise linear function.}
Constraints \eqref{eqn:min-up} represent the minimum-up time {limits} for the generator. That is, if {the generator} starts up at node $i$, then it should {stay} online for all the nodes in $\He_{L}(m)$. {Similarly, constraints \eqref{eqn:min-down} represent the minimum-down time limits. If the generator shuts down at node $m$, then it should be kept offline for all the nodes in $\He_{\ell}(m)$. Constraints \eqref{eqn:u-def-1} and \eqref{eqn:u-def-2} describe {the relationship between $u$ and $y$.} Constraints \eqref{eqn:generator-bound} describe the upper and lower bounds of electricity generation amount if the generator is online at node $m$. Constraints \eqref{eqn:ramp-up} and \eqref{eqn:ramp-down} describe the ramp-up rate and ramp-down rate limits, respectively.

\subsection{An Optimality Condition}
{We first derive a property for the extreme points of conv($\mathcal{W}$), where}
 $\mathcal{W} = \{(x, y, u) \in \R^{|\Ve|} \times \B^{2|\Ve| }: \eqref{eqn:min-up} - \eqref{eqn:ramp-down}\}$, {before exploring the optimality condition.}

\begin{proposition} \label{prop:suc_ext_point}
For any extreme point $(\bar{x}, \bar{y}, \bar{u})$ of conv($\mathcal{W}$), $\bar{x}_m \in \mathcal{Q}$ for all $m \in \Ve$.
\end{proposition}
\begin{proof}
{We prove this claim by a contradiction method.}
Suppose that there exists some $m \in \Ve$ such that $\bar{x}_m \notin \mathcal{Q}$ for an extreme point $(\bar{x}, \bar{y}, \bar{u})$ of conv($\mathcal{W}$), i.e., $\bar{x}_m \in (\Clower, \Cupper) \setminus \mathcal{Q}$. In the following we construct two feasible points of conv($\mathcal{W}$) to represent $(\bar{x}, \bar{y}, \bar{u})$ in a convex combination of these two points, leading to the contradiction. Before that, we first construct a subtree of $\Ve$. If \rred{one or both of} the following conditions hold,
\begin{itemize}[nolistsep]
\item $t(m) \geq 2$ and $|x_m - x_{m^-}|=V$,
\item $t(m) \leq T-1$ and $|x_m - x_j|=V$ for some $j$ such that $m = j^-$,
\end{itemize}
then we construct a subtree of $\Ve$ that consists of nodes around node $m$ (e.g., the subtree that consists of blue nodes in Figure \ref{fig:subtree}), denoted as $\mathcal{K}(m)$, such that for $\forall n_1, n_2 \in \mathcal{K}(m)$ with $n_1^- = n_2$ or $n_2 = n_1^-$, $|x_{n_1} - x_{n_2}| = V$, and for some node $n \in \mathcal{K}(m)$ (denoted as boundary node of $\mathcal{K}(m)$), $t(n) \in \{1,T\}$ or $\exists b \in \Ve \setminus \mathcal{K}(m)$ with $n=b^-$ or $b=n^-$ such that $|x_b-x_n| \neq V$.
Otherwise, we let $\mathcal{K}(m) = \{m\}$. It is easy to observe that for any node $s \in \mathcal{K}(m)$ with $s \neq m$, there exists a unique shortest path to connect nodes $s$ and $m$, and we define the distance between nodes $s$ and $m$, 
denoted as $dist(s,m)$, as the number of edges on this unique path, i.e., the number of nodes on this unique path minus one. 
For example, in Figure \ref{fig:subtree}, $dist(s,m)=4$. We consider two points $(\bar{x}^1, \bar{y}, \bar{u})$ and $(\bar{x}^2, \bar{y}, \bar{u})$ such that $\bar{x}^1_r = \bar{x}_r + \epsilon$ for $r \in \mathcal{K}(m)$, $\bar{x}^2_r = \bar{x}_r - \epsilon$ for $r \in \mathcal{K}(m)$, and $\bar{x}^1_r = \bar{x}^2_r = \bar{x}_r$ for $r \notin \mathcal{K}(m)$, where $\epsilon$ is an arbitrarily small positive number.

\begin{figure}[!htbh]
\begin{center}
\begin{tikzpicture}[scale=0.5]
\draw[line width=0.5pt] (-2, 0) -- (0, 0);
\draw[line width=0.5pt] (-2, 0) -- (0, 3);
\draw[line width=0.5pt] (-2, 0) -- (0, -3);
\draw[line width=0.5pt, dashed] (0, 0) -- (2, 0);
\draw[line width=0.5pt, dashed] (0, 3) -- (1, 3);
\draw[line width=0.5pt, dashed] (0, -3) -- (1, -3);
\draw[line width=0.5pt] (2, 0) -- (4, 0);
\draw[line width=0.5pt] (2, 0) -- (4, 3);
\draw[line width=0.5pt] (2, 0) -- (4, -3);
\draw[line width=0.5pt, dashed] (4, 0) -- (5.5, 0);
\draw[line width=0.5pt] (4, 3) -- (6, 3);
\draw[line width=0.5pt] (4, -3) -- (6, -3);
\draw[line width=0.5pt, dashed] (6, 3) -- (8, 3);
\draw[line width=0.5pt, dashed] (6, -3) -- (8, -3);

\draw[line width=0.5pt] (8, 3) -- (10, 5);
\draw[line width=0.5pt] (8, 3) -- (10, 1);
\draw[line width=0.5pt] (8, -3) -- (10, -5);
\draw[line width=0.5pt] (8, -3) -- (10, -1);

\draw[line width=0.5pt] (10, 5)-- (12, 5);
\draw[line width=0.5pt] (10, 1)-- (12, 1);
\draw[line width=0.5pt] (10, -5)-- (12, -5);
\draw[line width=0.5pt] (10, -1)-- (12, -1);

\draw[line width=0.5pt, dashed] (12, 5)-- (14, 5);
\draw[line width=0.5pt, dashed] (12, 1)-- (14, 1);
\draw[line width=0.5pt, dashed] (12, -5)-- (14, -5);
\draw[line width=0.5pt, dashed] (12, -1)-- (14, -1);

\draw(-2, 0) circle (0.5cm);
\fill[white] (-2, 0) circle (0.46cm);
\draw (-2,0.5) node[anchor=north] {$1$};

\draw(0, 0) circle (0.5cm);
\fill[white] (0, 0) circle (0.46cm);

\draw(0, 3) circle (0.5cm);
\fill[white] (0, 3) circle (0.46cm);
\draw(0, -3) circle (0.5cm);
\fill[white] (0, -3) circle (0.46cm);    

\draw(2, 0) circle (0.5cm);
\fill[white] (2, 0) circle (0.46cm);

\draw(4, 0) circle (0.5cm);
\fill[white] (4, 0) circle (0.46cm);    

\draw(4, 3) circle (0.5cm);
\fill[white] (4, 3) circle (0.46cm);

\draw(6, 3) circle (0.5cm);
\fill[white] (6, 3) circle (0.46cm);

\draw(8, 3) circle (0.5cm);
\fill[white] (8, 3) circle (0.46cm);

\fill[blue] (4, -3) circle (0.5cm);

\fill[blue] (6, -3) circle (0.5cm);

\fill[blue] (8, -3) circle (0.5cm);

\draw(10, 5) circle (0.5cm);
\fill[white] (10, 5) circle (0.46cm);

\draw(12, 5) circle (0.5cm);
\fill[white] (12, 5) circle (0.46cm);
\draw(14, 5) circle (0.5cm);
\fill[white] (14, 5) circle (0.46cm);

\fill[blue] (10, -1) circle (0.5cm);
\draw (10,-1.5) node[anchor=north] {$m$};
\fill[blue] (12, -1) circle (0.5cm);
\draw(14, -1) circle (0.5cm);
\fill[white] (14, -1) circle (0.46cm);

\draw(10, 1) circle (0.5cm);
\fill[white] (10, 1) circle (0.46cm);
\draw(12, 1) circle (0.5cm);
\fill[white] (12, 1) circle (0.46cm);
\draw(14, 1) circle (0.5cm);
\fill[white] (14, 1) circle (0.46cm);

\fill[blue] (10, -5) circle (0.5cm);
\fill[blue] (12, -5) circle (0.5cm);
\fill[blue] (14, -5) circle (0.5cm);
\draw (14,-6.5) node[anchor=south] {$s$};


\end{tikzpicture}
\caption{Subtree representation} \label{fig:subtree}
\end{center}
\end{figure}

Now we show these two points constructed are feasible for conv($\mathcal{W}$) by considering the following three possible cases.
\begin{enumerate}[(1)]
\item If there exists some boundary node $n$ of $\mathcal{K}(m)$ such that $\bar{x}_{b} = 0$ with $b=n^-$ or $n=b^-$, then we have $\Clower < \bar{x}_{n} < \Vupper$. Otherwise, 1) If $\bar{x}_{n} = \Clower$, it follows that $\bar{x}_m = \Clower + kV$ for some $k \in [0, dist(n,m)]_{\Z}$ by definition. Since $\bar{x}_m \notin \mathcal{Q}$, it further follows that $k \geq \alpha_1+1 = \min \{T, \lfloor (\Cupper-\Clower)/V \rfloor \}+1$, which contradicts to the fact that $k \leq T$ and $\bar{x}_t < \Cupper$. 2) If $\bar{x}_{n} = \Vupper$, it follows that $\bar{x}_m = \Vupper + kV$ for some $k \in [0, dist(n,m)]_{\Z}$ by definition. Since $\bar{x}_m \notin \mathcal{Q}$, it further follows that $k \geq \alpha_2+1 = \min \{T, \lfloor (\Cupper-\Vupper)/V \rfloor \}+1$, which contradicts to the fact that $k \leq T$ and $\bar{x}_m < \Cupper$. Therefore, it is feasible to increase or decrease $\bar{x}_{n}$ by $\epsilon$.
\item If there exists some boundary node $n$ of $\mathcal{K}(m)$ such that $\bar{x}_{b} > 0$ with $b=n^-$ or $n=b^-$, then we have $\Clower < \bar{x}_{n} < \Cupper$. Otherwise, 1) If $\bar{x}_{n} = \Clower$, we can similarly show the contradiction as above. 2) If $\bar{x}_{n} = \Cupper$, it follows that $\bar{x}_m = \Cupper - kV$ for some $k \in [0, dist(n,m)]_{\Z}$ by definition. Since $\bar{x}_m \notin \mathcal{Q}$, it further follows that $k \geq \alpha_1+1 = \min \{T, \lfloor (\Cupper-\Clower)/V \rfloor \}+1$, which contradicts to the fact that $k \leq T$ and $\bar{x}_t > \Clower$. Therefore, it is feasible to increase or decrease $\bar{x}_{n}$ by $\epsilon$ since $|\bar{x}_{n} - \bar{x}_{b}| < V$ by definition.
\item If there does not exist node {$b \in \Ve$} with $b=n^-$ or $n=b^-$ for some boundary node $n$ of $\mathcal{K}(m)$, i.e., $t(n) \in \{1, T\}$, then similarly we can follow the arguments above to show that $\Clower < \bar{x}_{n} < \Cupper$. It follows that it is feasible to increase or decrease $\bar{x}_{n}$ by $\epsilon$.
\end{enumerate}

In summary, we show that in all cases it is feasible to increase or decrease $\bar{x}_{n}$ by $\epsilon$ for each boundary node $n$ of $\mathcal{K}(m)$ and thus feasible to increase or decrease $\bar{x}_r$ by $\epsilon$ for all $r \in \mathcal{K}(m)$. It follows that both $(\bar{x}^1, \bar{y}, \bar{u})$ and $(\bar{x}^2, \bar{y}, \bar{u})$ are feasible points of conv($\mathcal{W}$), and $(\bar{x}, \bar{y}, \bar{u}) = \frac{1}{2}(\bar{x}^1, \bar{y}, \bar{u}) + \frac{1}{2}(\bar{x}^2, \bar{y}, \bar{u})$. Therefore, $(\bar{x}, \bar{y}, \bar{u})$ is not an extreme point of conv($\mathcal{W}$), which is a contradiction.
\end{proof}

Now, we begin to characterize the optimality condition for {the stochastic \rred{single-}UC problem} \eqref{eqn:MSS}. Similar to the deterministic {\rred{single-}UC} problem, 
 {$f(\cdot)$} is approximated by a $K-$piece piecewise linear function.
 {Accordingly, formulation} \eqref{eqn:MSS} can be reformulated as
\begin{eqnarray}
&\min  & \sum_{m \in \Ve } p_m \bigg( \overline{U}u_m + \underline{U}(y_{m^-} - y_m + u_m) +\varphi_m \bigg) \nonumber \\
&\mbox{s.t.}& (x,y,u) \in \mathcal{W}, \nonumber \\
&& \varphi_m \geq \mu_k^m x_m + \nu_k y_m, \ \forall k \in [1,K]_{\Z}, \forall m \in \Ve. \label{eqn:suc_piecewise_cons}
\end{eqnarray}


\begin{proposition} \label{prop:suc_opt_sol}
Problem \eqref{eqn:MSS} has at least one optimal solution $(\bar{x},\bar{y},\bar{u})$ with $\bar{x}_m \in \mathcal{Q}_d$ for all $m \in \Ve$.
\end{proposition}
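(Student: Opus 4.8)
The plan is to argue by contradiction, combining the subtree--perturbation construction from the proof of Proposition~\ref{prop:suc_ext_point} with the piecewise--linear bookkeeping from the proof of Proposition~\ref{prop:duc_opt_sol}. Among all optimal solutions of Problem~\eqref{eqn:MSS}, I would pick one, $(\bar{x},\bar{y},\bar{u})$, minimizing the number of nodes $i\in\Ve$ with $\bar{x}_i\notin\mathcal{Q}_s$, and suppose for contradiction that this number is positive, so that $\bar{x}_i\in(\Clower,\Cupper)\setminus\mathcal{Q}_s$ for some node $i$. Since $\hat{\mathcal{Q}}\subseteq\mathcal{Q}_s$ we also have $\bar{x}_i\notin\hat{\mathcal{Q}}$, so the subtree $\mathcal{K}(i)$ and the perturbed points $(\bar{x}^1,\bar{y},\bar{u})$, $(\bar{x}^2,\bar{y},\bar{u})$ (obtained by replacing $\bar{x}_r$ with $\bar{x}_r+\epsilon$, resp.\ $\bar{x}_r-\epsilon$, for all $r\in\mathcal{K}(i)$) are available exactly as in that proof, and the three boundary--node cases there show that both perturbed points stay feasible for $\mathcal{W}$ for all sufficiently small $\epsilon>0$. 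It then remains to control the epigraph constraints~\eqref{eqn:suc_piecewise_cons} and the objective on $\mathcal{K}(i)$.

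For the epigraph constraints, the key observation (mirroring Proposition~\ref{prop:duc_opt_sol}) is that at most one linear piece is active at each node of $\mathcal{K}(i)$. Indeed, for $r\in\mathcal{K}(i)$ the chain of edges joining $r$ to $i$ inside $\mathcal{K}(i)$ has all differences equal to $V$, so $\bar{x}_i=\bar{x}_r\pm dist(r,i)\,V$; if two adjacent pieces $k$ and $k+1$ were both tight at $r$, then $\bar{x}_r=A_k$, hence $\bar{x}_i=A_k+nV$ with $|n|\le dist(r,i)$, and since $\bar{x}_i\in(\Clower,\Cupper)$ this would place $\bar{x}_i$ in $\mathcal{Q}_s$, a contradiction. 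The same chain computation shows that no node of $\mathcal{K}(i)$ can have its generation value in $\mathcal{Q}_s$ at all. Consequently exactly one piece $k_r$ is active at each $r\in\mathcal{K}(i)$, so the perturbation changes the objective by exactly $\pm\sigma\epsilon$ with $\sigma:=\sum_{r\in\mathcal{K}(i)}\mu^r_{k_r}$, all start-up/shut-down terms and the $y,u$ variables being untouched.

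If $\sigma\neq0$, choosing the sign of $\epsilon$ opposite to that of $\sigma$ yields a feasible point with strictly smaller objective, contradicting optimality. If $\sigma=0$, both perturbations preserve optimality, and I would slide $\epsilon$ in the decreasing direction (so that no start-up/shut-down ramp bound of the form $\bar{x}_r\le\Vupper$ can be the binding event) until the first extra constraint becomes tight. Because $\mathcal{K}(i)$ is finite and $\mathcal{W}$ is bounded, this threshold is reached, and at it either (i) some $\bar{x}_r$, $r\in\mathcal{K}(i)$, hits $\Clower$ or crosses a turning point $A_k$ of its own cost curve, in which case the $V$-chain puts $\bar{x}_i$ into $\mathcal{Q}_s$; or (ii) a ramp inequality on an edge from a boundary node of $\mathcal{K}(i)$ to a node outside it becomes tight, in which case I would adjoin that outside node to $\mathcal{K}(i)$ (it cannot already lie in $\mathcal{Q}_s$, by the same chain argument applied to $\bar{x}_i$) and continue. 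Case (ii) strictly enlarges $\mathcal{K}(i)$ and so can recur only finitely often before case (i) occurs; either way one arrives at an optimal solution with strictly fewer coordinates outside $\mathcal{Q}_s$, contradicting minimality.

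The step I expect to require the most care is the $\sigma=0$ analysis on the tree: one must check that the first event when sliding along $\mathcal{K}(i)$ is always of the benign type (i) or the chain-enlarging type (ii) and never a dead end, which is precisely why the slide is taken in the decreasing direction so as to avoid the start-up ramp bound $\Vupper$. Everything else is a faithful transcription of Propositions~\ref{prop:suc_ext_point} and~\ref{prop:duc_opt_sol}, with the interval $[s_1,s_2]_{\Z}$ replaced throughout by the subtree $\mathcal{K}(i)$ and the constants $\alpha_1,\alpha_2$ replaced by $\beta_1,\beta_2$.
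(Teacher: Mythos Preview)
Your proposal follows the same route as the paper: reuse the subtree $\mathcal{K}(i)$ and the two $\epsilon$-perturbations from Proposition~\ref{prop:suc_ext_point}, then transplant the ``at most one active piece on $\mathcal{K}(i)$'' observation from Proposition~\ref{prop:duc_opt_sol} to conclude that one of the two perturbations does not increase the objective. The paper's own proof stops there and does not separately treat the degenerate case $\sigma=\sum_{r\in\mathcal{K}(i)}\mu^r_{k_r}=0$; your minimality-plus-sliding device is a legitimate (and more careful) way to close that gap, so your write-up is in fact slightly more complete than the paper's.
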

\begin{proof}
{We prove this claim by a contradiction method.} Suppose that there exists some $m \in \Ve$ such that $\bar{x}_m \notin \mathcal{Q}_d$ for the optimal solution $(\bar{x}, \bar{y}, \bar{u})$ of Problem \eqref{eqn:MSS}, i.e., $\bar{x}_m \in (\Clower, \Cupper) \setminus \mathcal{Q}_d$, with the optimal value $\bar{z} = \sum_{m \in \Ve } \big(  \overline{U} \bar{u}_m + \underline{U}(\bar{y}_{m^-} - \bar{y}_m + \bar{u}_m) +  \bar{\varphi}_m \big)$. In the following we construct a feasible solution to obtain a better objective value. Following the proof in Proposition \ref{prop:suc_ext_point}, we construct a subtree $\mathcal{K}(m)$ as shown in Figure \ref{fig:subtree} and we can always construct two feasible points of $\mathcal{W}$, $(\bar{x}^1, \bar{y}, \bar{u})$ and $(\bar{x}^2, \bar{y}, \bar{u})$, such that $\bar{x}^1_r = \bar{x}_r + \epsilon$ for $r \in \mathcal{K}(m)$, $\bar{x}^2_r = \bar{x}_r - \epsilon$ for $r \in \mathcal{K}(m)$, and $\bar{x}^1_r = \bar{x}^2_r = \bar{x}_r$ for $r \notin \mathcal{K}(m)$, where $\epsilon$ is an arbitrarily small positive number. Similar to the proof in Proposition \ref{prop:duc_opt_sol}, we can increase or decrease $\bar{x}_r$ by $\epsilon$ for $r \in \mathcal{K}(m)$ to decrease the optimal value by at least $| \sum_{r \in \mathcal{K}(m)} \mu_{k_r}^r | \epsilon$ and the resulting solution, $(\bar{x}^1, \bar{y}, \bar{u})$ or $(\bar{x}^2, \bar{y}, \bar{u})$, is feasible for both $\mathcal{W}$ and constraints \eqref{eqn:suc_piecewise_cons}, where there is at most one piece of linear function of constraints \eqref{eqn:suc_piecewise_cons}, e.g., piece $k_r$, is tight for each node $r \in \mathcal{K}(m)$. Therefore we obtain the contradiction.
\end{proof}

In other words, in order to find an optimal solution to the multistage stochastic \rred{single-}UC problem, we only need to consider the feasible solutions $(x,y,u)$ where $x_m \in \mathcal{Q}_d$ for all $m \in \Ve$.
%

\subsection{An $\Oe(N)$ Time Dynamic Programming Algorithm}
We can explore the backward induction dynamic programming framework by
reusing the state space and the state transition graph we defined in Section \ref{subsec:duc-ot-dp}.
Let $ F_m (i)$ represent the optimal value function for node $ m \in \Ve $ in the scenario tree {in which the generator state of node $ m^- $ is state $i$}.
Based on Proposition \ref{prop:suc_opt_sol}, an optimal decision for {the} current state lies in set $ S(i) $. {Accordingly}, the Bellman equations can be formulated as follows:
\begin{eqnarray} \label{sto: bellman}
	&& F_m (i) = \min_{j\in S(i)} \bigg\{ E_{mij}  + \sum_{n\in \C_*(m)} F_{n}(j) \bigg\}, \  \forall i \in  \mathcal{S}, \forall m \in \Ve,  \slabel{sto: bellman2}
\end{eqnarray}
where node $ m $, replacing time $ t $ in \eqref{model:duc_lp_bell}, represents {a scenario tree} node, and $ E_{mij} = p_m \big( \overline{U}u(j) + \underline{U}(y(i) - y(j) +u(j)) + f_m(x(j), y(j)) \big) $ to describe the total generation cost minus the revenue at node $ m \in \Ve $. 
Note {here that} when $ m $ is a leaf node, $ C_*(m) = \emptyset $ and {accordingly} $\sum_{n\in \C_*(m)} F_{n}(j) = 0$. The objective is to find out the value of {$ F_1(0) $} where the {label ``$1$''} in the subscript represents the root node in the scenario tree while the {label ``$0$''} in the bracket represents the source state in the state space.

\begin{proposition} \label{prop:suc_orderN}
The stochastic \rred{single-}UC problem \eqref{eqn:MSS} can be solved in $ \Oe (N) $ time, \rred{or more specifically, in $ \Oe ({|\mathcal{Q}_d|}({|\mathcal{Q}_d|} L + \ell)N ) $} time.
\end{proposition}
\begin{proof}
In order to {obtain} the optimal objective value and optimal solution for the scenario-tree based multistage stochastic {\rred{single-}UC} problem, we need to calculate $ F_m (i) $ for all possible $m$ and $i$ and record the optimal candidates for them. To calculate the value of each optimal value function $ F_m (i) $ in the Bellman equations \eqref{sto: bellman}, we search among the candidate solution $ j\in S(i) $ and this step takes $ \Oe({|\mathcal{Q}_d|} |\C_*(m)|) $ time. Since there are in total $ {|\mathcal{Q}_d|} L + \ell $ number of nodes in the state graph, the computational time at each node $ m $ is $ \Oe ({|\mathcal{Q}_d|} ({|\mathcal{Q}_d|} L + \ell) |\C_*(m)|). $ Thus, the total time to calculate $  F_1(0) $ is $ \Oe ({|\mathcal{Q}_d|}({|\mathcal{Q}_d|} L + \ell)N ) $, where $ N$ represents the total number of nodes in the scenario tree. 
\end{proof}

Similar to Remark \ref{rmk:duc-su}, when start-up profile is considered, the conclusion in Proposition \ref{prop:suc_orderN} still holds.

\subsection{An Extended Formulation for Stochastic \rred{Single-}UC with Piecewise Linear Cost Function} \label{exsuc}
Following the same approach {as that in Section \ref{exuc},} we develop an extended formulation in linear program form for multistage stochastic {\rred{single-}UC} problem, which is proved to provide integral solutions. By incorporating the Bellman equations \eqref{sto: bellman}  as constraints, we can derive the following primal linear program similarly:
\begin{subeqnarray} \label{eqn:sto-bell}
	&\max  & F_1(0) \\
	&\mbox{s.t.}  & F_m(i) \leq E_{mij} + \sum_{n\in \C_*(m)} F_{n}(j), \ \forall i \in \mathcal{S}, j \in S(i), \forall m \in \Ve, \slabel{eqn:sto_p}
\end{subeqnarray}
where $ E_{mij} $ are parameters defined under equations \eqref{sto: bellman2} and $ F_m(i) $ are decision variables.

Similar to formulation \eqref{model:duc_lp_bell}, here we also resort to the dual formulation and then provide an extended linear {program}, which we {show} can provide solution to the {stochastic} \rred{single-}UC problem directly.
The dual formulation can be derived accordingly as follows:
\begin{subeqnarray} \label{duc_d}
	&\min  & \sum_{m \in \Ve, i \in \mathcal{S}, j \in S(i)} E_{mij}w_{mij} \\
	&\mbox{s.t.}  & \sum_{j \in S(1)} w_{11j} = 1, \slabel{sto:dual1}\\
	&& \sum_{j \in S(i)} w_{1ij} = 0, \ \forall i \in \mathcal{S} \setminus \mathcal{S}_*,\slabel{sto:dual2} \\	
	&& \sum_{j \in S(i)} w_{mij} - \sum_{k \in P(i)} w_{m^-,ki} = 0, \ \forall  i \in \mathcal{S}, \forall m \in \Ve \setminus \Ve_*, \slabel{sto:dual3} \\	
	&& w_{mij} \geq 0, \ \forall i \in \mathcal{S}, j \in S(i), \forall  m \in \Ve, \slabel{sto:dual4}
\end{subeqnarray}
where $  w_{mij} $ are dual variables corresponding to each constraint in the primal linear program \eqref{eqn:sto-bell}.

In the following, we {first} prove in a similar way to show that the polytope \eqref{sto:dual1} -- \eqref{sto:dual4} is an integral polytope.
\begin{lemma} \label{lemmasto:binary solution}
	The extreme points of the polytope \eqref{sto:dual1} -- \eqref{sto:dual4} are binary.
\end{lemma}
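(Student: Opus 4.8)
The plan is to mirror the argument of Lemma~\ref{lemma:binary solution}, with the single time axis $[1,T]_{\Z}$ replaced by the scenario tree $\mathcal{T}=(\Ve,\Ee)$. First I would note that the polytope \eqref{sto:dual1}--\eqref{sto:dual4} is bounded: summing \eqref{sto:dual3} over all states $i\in\mathcal{S}$ and telescoping down from the root using \eqref{sto:dual1} gives $\sum_{i\in\mathcal{S},\,j\in S(i)}w_{mij}=1$ for every $m\in\Ve$, so each $w_{mij}\in[0,1]$. Consequently it is enough to prove that for \emph{every} choice of real coefficients $\{E_{mij}\}$ the dual program \eqref{duc_d} admits a binary optimal solution; if so, no fractional vertex can be the unique minimizer of a linear objective over this polytope, hence every extreme point is binary.

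Fix such coefficients. By Proposition~\ref{prop:suc_opt_sol}, the stochastic UC \eqref{eqn:MSS} is solved by the dynamic program \eqref{sto: bellman}, whose optimal policy I encode as a map $\pi$ that assigns to each scenario-tree node $m\in\Ve$ a state-graph node $\pi(m)\in\mathcal{S}$, with $\pi(1)\in S(1)$ at the root (where $1$ denotes the ``source'' state) and $\pi(m)\in S(\pi(m^-))$ at every non-root $m$. I then define $\hat{w}_{mij}=1$ exactly when the policy transition at $m$ runs from $i$ to $j$ --- i.e., $m$ is the root with $i=1$, $j=\pi(1)$, or $m$ is non-root with $i=\pi(m^-)$, $j=\pi(m)$ --- and $\hat{w}_{mij}=0$ otherwise. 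This is binary by construction.

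Next I would verify that $\hat{w}$ is feasible. Constraint \eqref{sto:dual1} holds because exactly one $\hat{w}_{11j}$ equals $1$; \eqref{sto:dual2} holds because $\hat{w}_{1ij}=0$ for every non-source $i$; \eqref{sto:dual4} is immediate. For the flow-balance constraints \eqref{sto:dual3}, fix a non-root $m$ and a state $i$: the outflow $\sum_{j\in S(i)}\hat{w}_{mij}$ is $1$ if $\pi(m^-)=i$ and $0$ otherwise, while the inflow $\sum_{k\in P(i)}\hat{w}_{m^-,ki}$ is $1$ if $\pi(m^-)=i$ --- with the unique contributing $k$ being $\pi\big((m^-)^-\big)\in P(i)$, or $k=1$ when $m^-$ is the root --- and $0$ otherwise, so the two sides agree. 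The only delicate point here is the case $m^-=1$, where \eqref{sto:dual1}--\eqref{sto:dual2} play the role that \eqref{sto:dual3} plays for interior nodes; this is exactly the tree analogue of the ``step~1'' bookkeeping in the proof of Lemma~\ref{lemma:binary solution}.

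Finally I would check optimality. The dual objective at $\hat{w}$ equals $\sum_{m\in\Ve}E_{m,\pi(m^-),\pi(m)}$ (reading the root term as $E_{1,1,\pi(1)}$), and unrolling the Bellman recursion \eqref{sto: bellman} from the root gives $F_1(1)=\sum_{m\in\Ve}E_{m,\pi(m^-),\pi(m)}$ for the optimal policy $\pi$, precisely as in the $H(\hat{w})=F^*$ computation of Lemma~\ref{lemma:binary solution} except that the recursion now branches over $\C_*(m)$ at each node rather than following a single chain. Since $F_1(1)$ is the optimal value of the primal linear program \eqref{eqn:sto-bell}, strong LP duality shows $\hat{w}$ is an optimal dual solution. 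As the coefficients were arbitrary, the dual always has a binary optimum, which proves the lemma. The main obstacle is purely the bookkeeping: tracking flow conservation \eqref{sto:dual3} on a branching tree instead of a path, and handling the root boundary correctly; the telescoping Bellman identity and the ``binary optimum for every objective $\Rightarrow$ integral vertices'' reduction are routine once boundedness is in hand.
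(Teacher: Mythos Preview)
Your proposal is correct and follows essentially the same approach as the paper's proof: construct a binary $\hat{w}$ from the optimal dynamic-programming policy on the scenario tree, verify the flow-balance constraints \eqref{sto:dual1}--\eqref{sto:dual4} node by node, and match the dual objective to $F_1(1)$ via the Bellman recursion. Your explicit boundedness argument (telescoping \eqref{sto:dual3} to get $\sum_{i,j}w_{mij}=1$) is a helpful justification that the paper leaves implicit; note, however, that the appeal to Proposition~\ref{prop:suc_opt_sol} is unnecessary here, since for \emph{arbitrary} coefficients $E_{mij}$ the finite-state dynamic program \eqref{sto: bellman} is well-defined and optimal in its own right, independent of the UC interpretation.
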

\begin{proof}
	The proof is similar to that in Lemma \ref{lemma:binary solution}. $\forall \ E_{mij} \in {\R}$ , we prove there exists at least one optimal solution to the dual program \eqref{duc_d} that is binary. By solving the dynamic program for the stochastic \rred{single-}UC model, we can obtain an optimal decision of $ (x,y,u) $. We then construct $ \hat{w} $, which {is} binary, to represent the optimal solution. We let $ \hat{w}_{mij} = 1$ if on the {scenario tree} node $ m $ the optimal dynamic programming solution {shows the} decision from state $ i $ to state $ j $ in the state space. Otherwise, we let $ \hat{w}_{mij} = 0$.
	
	Similarly we can easily show that the constructed $ \hat{w} $ is feasible to the dual program by verifying that it satisfies constraints \eqref{sto:dual1} -- \eqref{sto:dual4}. Also, the optimality can be proved following the same approach in Lemma \ref{lemma:binary solution}. As $ \hat{w} $ is binary and optimal to the dual program {for} {any possible} cost coefficient $ E$, we have proved our claim.
\end{proof}

Next, we show that an optimal solution to the stochastic \rred{single-}UC problem \eqref{eqn:MSS} can be obtained in terms of the dual variables.
\begin{proposition} \label{thmsto: opt solution}
	If $ w^* $ is an optimal solution to {the} dual program \eqref{duc_d}, then
	\begin{eqnarray} \label{eqnsto:opt solution}
		&& x_m^* = \sum_{i \in \mathcal{S}, j \in S(i)} x(j) w_{mij}^*, y_m^* = \sum_{i \in \mathcal{S}, j \in S(i)} y(j) w_{mij}^*,  u_{{m}}^* = \sum_{i \in \mathcal{S}, j \in S(i)} u(j) w_{mij}^*, \  m \in \Ve, \label{eqnsto:opt solution1} 
	\end{eqnarray}
	is an optimal solution to the stochastic \rred{single-}UC problem \eqref{eqn:MSS}.
\end{proposition}
\begin{proof}
	The proof is similar to that in Proposition \ref{thm: opt solution}. The feasibility is satisfied because, from the proof of Proposition \ref{thm: opt solution}, for each scenario we can consider the multistage decision as a path in the directed graph and thus it satisfies all the physical and logical constraints. The optimality can be proved by replacing the decision $(x^*, y^*, u^*)$ with the corresponding expressions in equations \eqref{eqnsto:opt solution} in the objective function and verify that 
	\begin{eqnarray} 
		\sum_{ m \in \Ve } p_m \bigg(  \overline{U}u_m^* + \underline{U}(y_{m^-}^* - y_m^* + u_m^*) +  f_m(x_m^*,y_m^*) \bigg) =   \sum_{m \in \Ve, i \in \mathcal{S}, j \in S(i)} E_{mij}w_{mij}^*. \nonumber
	\end{eqnarray}
	Hence,  $ (x^*,y^*,u^*) $ is the optimal solution to the stochastic \rred{single-}UC problem.
\end{proof}

Now we are ready to establish the extended formulations for {the} stochastic {\rred{single-}UC} problem. We replace constraints \eqref{eqn:min-up} -- \eqref{eqn:nonnegativity} with constraints \eqref{sto:dual1} -- \eqref{sto:dual4} and add equations \eqref{eqnsto:opt solution} to represent the {relationship} between original decisions and the dual decision variables.
\begin{theorem}
	The extended formulation of the stochastic \rred{single-}UC problem \eqref{eqn:MSS} can be written as follows:
	\begin{subeqnarray} 
		&\max  & \sum_{m \in \Ve } p_m \bigg(  \overline{U}u_m + \underline{U}(y_{m^-} - y_m + u_m) + \rblue{\varphi_m } \bigg) \nonumber \\
		&\mbox{s.t.}  &  x_m = \sum_{i \in \mathcal{S}, j \in S(i)} x(j) w_{mij}, \ y_m = \sum_{i \in \mathcal{S}, j \in S(i)} y(j) w_{mij}, \nonumber \\
		&& u_m = \sum_{i \in \mathcal{S}, j \in S(i)} u(j) w_{mij}, \ \forall m \in \Ve, \ \rblue{\eqref{eqn:suc_piecewise_cons},} \ \eqref{sto:dual1} - \eqref{sto:dual4}, \nonumber
	\end{subeqnarray}
	and if $ (x^*,y^*,u^*,w^*) $ is an optimal solution to the extended formulation, then  $ (x^*,y^*,u^*) $ is an optimal solution to the stochastic \rred{single-}UC problem.
\end{theorem}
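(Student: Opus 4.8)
The plan is to mirror the argument used for the deterministic extended formulation in Theorem~\ref{thm:extend}, now invoking the two structural facts already established for the stochastic model: Lemma~\ref{lemmasto:binary solution}, which says the polytope \eqref{sto:dual1}--\eqref{sto:dual4} has only binary extreme points, and Proposition~\ref{thmsto: opt solution}, which says that reading off $(x^*,y^*,u^*)$ from an optimal dual solution $w^*$ via \eqref{eqnsto:opt solution} yields an optimal solution of \eqref{eqn:MSS}. The key observation is that the proposed extended model is obtained from \eqref{eqn:MSS} by a pure change of variables: the physical and logical constraints \eqref{eqn:min-up}--\eqref{eqn:nonnegativity} are removed, the flow-balance constraints \eqref{sto:dual1}--\eqref{sto:dual4} on the path variables $w$ are imposed in their place, and the linking equations $x_m=\sum_{i\in\mathcal{S},j\in S(i)} j_x w_{mij}$, $y_m=\sum_{i\in\mathcal{S},j\in S(i)} j_y w_{mij}$, $u_m=\sum_{i\in\mathcal{S},j\in S(i)} j_u w_{mij}$ are added. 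So every feasible $w$ determines a unique $(x,y,u)$, and it remains to check (a) feasibility of this $(x,y,u)$ for \eqref{eqn:MSS} and (b) equivalence of the two objectives.

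For (a), I would first note that by Lemma~\ref{lemmasto:binary solution} the feasible set of \eqref{sto:dual1}--\eqref{sto:dual4} is an integral polytope, so at an optimal vertex $w^*\in\{0,1\}$; the constraints \eqref{sto:dual1}--\eqref{sto:dual4} then force $w^*$ to select, for each scenario-tree node $m$, exactly one arc $i\to j$ of the state graph of Figure~\ref{fig:direct graph} (the same graph of Section~\ref{subsec:duc-ot-dp}, with $\mathcal{Q}_s$ replacing $\mathcal{Q}_d$), chained consistently so that along every root-to-leaf path of the scenario tree one walks a coherent state path starting at the source node $1$. By the construction of that graph, such a walk automatically encodes states satisfying the minimum-up/-down, generation-bound, ramping, and logical restrictions; hence the $(x^*,y^*,u^*)$ extracted through \eqref{eqnsto:opt solution} lies in $\mathcal{W}$ and obeys \eqref{eqn:nonnegativity}. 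This is precisely the feasibility half of Proposition~\ref{thmsto: opt solution}, which I would cite rather than re-derive.

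For (b), substituting \eqref{eqnsto:opt solution} into the extended objective and using $j_u,j_y\in\{0,1\}$ together with $E_{mij}=p_m\bigl(\overline{U}j_u+\underline{U}(i_y-j_y+j_u)+f_m(j_x,j_y)\bigr)$, the node-$m$ term of the original objective equals $\sum_{i\in\mathcal{S},j\in S(i)} E_{mij} w^*_{mij}$; summing over $m\in\Ve$ shows the two objective values agree on the constructed pair. Conversely, Proposition~\ref{thmsto: opt solution} (via the Bellman recursion \eqref{sto: bellman} and Lemma~\ref{lemmasto:binary solution}) certifies that the minimum of the dual objective is $F_1(1)$, the optimal value of \eqref{eqn:MSS}. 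Therefore any optimal $(x^*,y^*,u^*,w^*)$ of the extended formulation projects onto an optimal $(x^*,y^*,u^*)$ of \eqref{eqn:MSS}, which is the claim; I would close by noting, as in the deterministic case, that the extended model is in addition integral in $(y,u,w)$.

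The step I expect to be the main obstacle is making rigorous the correspondence between a fractional $w$ feasible for \eqref{sto:dual1}--\eqref{sto:dual4} and a convex combination of incidence vectors of coherent state paths on the \emph{tree} (rather than on the line graph of the deterministic case): the constraints \eqref{sto:dual3} are stated per state $i$ and per node $m$, and one must argue that they split the $w$-mass consistently across the branching set $\C_*(m)$, so that Lemma~\ref{lemmasto:binary solution}'s vertex integrality does deliver a single well-defined state path per scenario. Once that flow-decomposition observation is in hand, the remainder is the routine substitution and strong-duality bookkeeping already carried out for the deterministic model in Theorem~\ref{thm:extend}.
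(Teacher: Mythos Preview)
Your proposal is correct and follows essentially the same approach as the paper: substitute the linking equations \eqref{eqnsto:opt solution} into the objective, then invoke Proposition~\ref{thmsto: opt solution} for feasibility and optimality of the projected $(x^*,y^*,u^*)$ and Lemma~\ref{lemmasto:binary solution} for integrality, exactly mirroring Theorem~\ref{thm:extend}. Your final ``obstacle'' paragraph is unnecessary worry: you do not need a flow decomposition of fractional $w$, since after substitution the objective is linear in $w$ and Lemma~\ref{lemmasto:binary solution} already certifies that every vertex of \eqref{sto:dual1}--\eqref{sto:dual4} is binary, so an optimal $w^*$ can be taken at a vertex and the single-path-per-scenario structure follows directly.
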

\begin{proof}
	The proof is similar to that in Theorem \ref{thm:extend}. {The details are omitted here.}
\end{proof}



\section{Conclusion} \label{con}
In this paper, efficient dynamic programming algorithms and linear program reformulations {were} proposed to solve the deterministic and stochastic {\rred{single-}UC} problems. We started with deriving a more efficient dynamic programming algorithm to solve the deterministic {\rred{single-}UC} problem with {a} general convex cost function. Our proposed algorithm refines a previous work by enhancing the computational time {solving \rred{single-}UC} from $ \Oe(T^3) $ time to $ \Oe(T^2) $ time when the economic dispatch problems are solved in advance.
Motivated by this, we obtained an integral polytope to describe the integer feasible region of the deterministic \rred{single-}UC problem. 
Meanwhile, we derive 
an efficient extended reformulation in a higher dimensional space that can provide integral solutions for the deterministic \rred{single-}UC problem.
In addition, for the most common piecewise linear cost {objective} function {case}, by exploiting the optimality condition for the {deterministic \rred{single-}UC problem}, we proposed a more efficient dynamic programming algorithm that {runs in} $ \Oe(T) $ time and furthermore, 
our study was {extended} to solve the stochastic {\rred{single-}UC} problem in $ \Oe(N) $ time {by also {deriving the} corresponding optimality condition}. Extended formulations {were} {further} derived for {both deterministic and stochastic \rred{single-}UC problems} and integral solutions for {both of them} {were} provided.

Our studies provide efficient polynomial time algorithms for a class of \rred{single-}UC problems, especially linear time for certain cases. Furthermore, we provide one of the first studies on deriving extended formulations for various \rred{single-}UC problems based on efficient dynamic programming algorithms. Besides solving single-generator self-scheduling/bidding problems, our polynomial time algorithms and/or extended formulations could potentially help speed up the MILP and Lagrangian Relaxation approaches to solve the \rred{multi-UC} problems efficiently.

\section*{Acknowledgments}
The authors thank the editor and the anonymous referees for their sincere suggestions on improving the quality of this paper. The work of K. Pan was partially supported by Hong Kong Polytechnic University under grants 1-ZE73 and G-UABE.


\baselineskip=11pt
\bibliographystyle{plain}
\bibliography{duc}

\newpage
\begin{appendices}
\baselineskip=22pt
\setcounter{proposition}{0}
\setcounter{lemma}{0}
\setcounter{theorem}{0}

\end{appendices}

\end{document}